\newcommand{\comment}[1]{}
\newif\ifpdf
\newtheorem{thm}{Theorem}[section]
\newtheorem{corollary}[thm]{Corollary}
\newtheorem{lemma}[thm]{Lemma}
\newtheorem{theorem}[thm]{Theorem}
\begin{document}

\title{Connectivity of $3$-distance graphs}
\author{{\small S. R. Musawi$^{a}$, S. H. Jafari}$^{b}$\\
\\{\small $^{a}$Faculty of Mathematical Sciences, Shahrood University of Technology,}\\ {\small  P.O. Box 36199-95161, Shahrood, Iran}\\
{\small Email: r\_musawi@shahroodut.ac.ir, r\_musawi@yahoo.com}\\
\\{\small $^{b}$Faculty of Mathematical Sciences, Shahrood University of Technology,}\\ {\small  P.O. Box 36199-95161, Shahrood, Iran}\\
{\small Email: shjafari@shahroodut.ac.ir, shjafari55@gmail.com }\\
}
\date{}
\maketitle

\begin{abstract}
For a simple graph $G$, the $3$-distance graph, $D_3(G)$, is a graph with the vertex set $V(G)$ and two vertices are adjacent if and only if their distance is $3$ in the graph $G$. For a connected graph $G$, we provide some conditions for the connectedness of $D_3(G)$. Also, we characterize all trees and unicyclic graphs with connected $3$-distance graph.

\end{abstract}

Keywords: $3$-distance graph, unicyclic graph, diameter, connectivity\\
Mathematics Subject Classification : 05C12, 05C15

\section{Introduction}

In this paper, we only consider the undirected finite simple graphs $G = (V,E)$ where $V=V(G)$ and $E=E(G)$ are the vertex set and edge set of $G$, respectively.
A graph $H=(V(H),E(H))$ is called a subgraph of $G$, denoted by $H\leqslant G$, if $V(H)\subseteq V(G)$ and 
$E(H)\subseteq E(G)$.
A sequence $x_1x_2\cdots x_t$ of distinct vertices of $G$ (except $x_1$ and $x_t$)  is called a path between two vertices $x_1$ and $x_t$ if  each consequent two vertices are adjacent.  If $x_1=x_t$, then the path  $x_1x_2\cdots x_t$ is called a cycle of graph.
The length of the path $x_1x_2\cdots x_t$ is defined as $t-1$.
 The distance between two vertices of $x,y\in V(G)$, $d(x,y)$ (or $d_G(x,y)$), is the minimum length of all paths between them. The diameter of $G$, $diam(G)$, is the maximum distance between vertices of $G$. 
Also, for $x\in V(G)$ and $H\leqslant G$, $d(x,H)=\underset{h\in H}{\min}\, d(x,h)$ is called the distance of the vertex $x$ and the subgraph $H$.

For a positive integer $k$, the $k$-power of $G$, denoted by $G^k$, is the graph with the vertex set $V(G^k)=V(G)$ and two vertices $x$ and $y$ are adjacent if and only if $d(x,y)\leqslant k$.
The $k$-distance graph of a graph $G$, $D_k(G)$, is a graph with the  vertex set $V(G)$ and two vertices are adjacent if and only their distance is $k$ in the graph $G$. 
One can see that for any graph $G$, $E(D_k(G))=E(G^k)-E(G^{k-1})$. 

The idea of  $k$-distance graphs was first studied by Harary et al. \cite{hhk}. They investigated the connectedness of $2$-distance graph of some graphs. In the book by Prisner  \cite[157-159]{p}, the dynamics of the $k$-distance operator was explored. Furthermore, Boland et al. \cite{bhl} extended the $k$-distance operator to a graph invariant, they called distance-n domination number.
To solve one problem posed by Prisner \cite[Open Problem 29]{p}, Zelinka
\cite{z} constructed a graph that is r-periodic under the $2$-distance operator for each
positive even integer r. Prisner’s problem was completely solved by Leonor in her
master’s thesis \cite{l}, wherein she worked with a graph construction different from
that of Zelinka’s.
 Azimi et al. \cite{af2} studied all graphs whose $2$-distance graphs
have maximum degree 2. They also solved the problem of finding all graphs whose
$2$-distance graphs are paths or cycles. Also, the same authors \cite{af1} determined
all finite simple graphs with no square, diamond, or triangles with a common
vertex that are self $2$-distance graphs (that is, graphs G satisfying the equation
$D_2(G)=G$. They further showed the nonexistence of cubic self $2$-distance
graphs.
In \cite{c}, Ching   gave some characterizations of $2$-distance graphs and found all graphs X such that $D_2(X)=kP_2$
or $K_m \cup K_n$, where $k \geqslant 2$ is an integer, $P_2$ is the path of order $2$, and $K_m$ is the complete graph of order $m \geqslant 1$. 
In \cite{k} Khormali gave some conditions for connectivity $D_k(G)$. For $D_2(G)$, he proved that if $G$ has no odd cycle, then $D_2(G)$ is disconnected.
E. Gaar et al. charactrized all graphs with $2$-regular $2$-distance graph.
Recently, in \cite{jm}, Jafari et al. completely characterized all graphs with connected $2$-distance graph.

In this paper, for  a connected graph $G$  and a 3-induced subgraph $H$ of $G$ (Lemma \ref{lem22}), we prove that, if $D_3(H)$ is connected and for any $x\in V(G)\setminus V(H)$ there is $y\in V(H)$ such that $d(x,y)\geqslant3$, then $D_3(G)$ is connected. Also, we charactrize all trees (Theorem \ref{the27}) and unicyclic graphs (Theorems \ref{the28}, \ref{the29}, \ref{the213}, \ref{the215} and \ref{the216}) with connected $3$-distance graph.

\section{Connectivity of $D_3(G)$}

Let $G$ be a graph and $H$ be a subgraph of $G$. We say $H$ is t-induced subgraph of $G$ if  for any $x,y\in V(H)$ with  $d_G(x,y)\leqslant t$ then $d_H(x,y)=d_G(x,y)$. Clearly, $1$-induced subgraph is the known induced subgraph.

\begin{lemma}\label{lem22}
Let $G$ be a connected graph and $H$ be a 3-induced subgraph of $G$. If $D_3(H)$ is connected and for any $x\in V(G)\setminus V(H)$ there is $y\in V(H)$ such that $d(x,y)\geqslant3$, then $D_3(G)$ is connected.
\end{lemma}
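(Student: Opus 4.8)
The plan is to show that $D_3(G)$ is connected by proving that every vertex of $G$ lies in the same connected component of $D_3(G)$ as some vertex of $H$. Since $D_3(H)$ is connected and $H$ is a $3$-induced subgraph (so that distances up to $3$ inside $H$ agree with those in $G$, guaranteeing that $E(D_3(H)) \subseteq E(D_3(G))$), all vertices of $V(H)$ already sit in a single component of $D_3(G)$. It then suffices to attach each $x \in V(G)\setminus V(H)$ to this common component. My first step would therefore be to fix such an $x$ and, using the hypothesis, select a witness $y \in V(H)$ with $d_G(x,y)\geqslant 3$.

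The core idea is to walk from $x$ toward $y$ along a shortest path and extract from it a vertex at distance exactly $3$ from $x$, which is then adjacent to $x$ in $D_3(G)$. Concretely, I would take a shortest $x$--$y$ path $x = v_0 v_1 v_2 \cdots v_m = y$ in $G$, where $m = d_G(x,y) \geqslant 3$. Because consecutive vertices are at distance $1$ and the path is geodesic, $d_G(x, v_i) = i$ for each $i$; in particular $d_G(x, v_3) = 3$, so $x$ is adjacent to $v_3$ in $D_3(G)$. This shows every outside vertex $x$ has at least one neighbor in $D_3(G)$, namely some vertex $v_3$ along a geodesic to its witness.

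The main obstacle is that $v_3$ need not itself belong to $V(H)$, so proving $x$ is adjacent to something is not enough — I must propagate connectivity back into the component containing $V(H)$. I would handle this by induction on $d_G(x, V(H))$, the distance from $x$ to the subgraph $H$. If $v_3 \in V(H)$ we are done, since then $x$ is directly joined in $D_3(G)$ to a vertex of the common component. Otherwise I would argue that $v_3$ is strictly closer to $H$ than $x$ is (or more carefully, that the new vertex can be chosen so the distance to $H$ decreases), and invoke the inductive hypothesis to conclude $v_3$ lies in the component of $V(H)$; since $x$ is adjacent to $v_3$, so does $x$. The delicate point to verify rigorously is that this reduction genuinely decreases $d_G(\cdot, V(H))$ at each stage — one must make sure the chosen intermediate vertex is a legitimately closer point to $H$, perhaps by always taking $y$ to realize $d_G(x,V(H))$ and checking that $d_G(v_3, V(H)) < d_G(x, V(H))$, so that the induction terminates and all of $V(G)$ collapses into the single connected component of $D_3(G)$.
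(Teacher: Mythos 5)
Your opening step is fine: since $H$ is $3$-induced, every edge of $D_3(H)$ is an edge of $D_3(G)$ (if $d_H(u,v)=3$ then $d_G(u,v)\leqslant 3$, hence $d_G(u,v)=d_H(u,v)=3$), so all of $V(H)$ lies in a single component of $D_3(G)$ and it suffices to attach each outside vertex to it. Your inductive step also works when $t=d_G(x,V(H))\geqslant 3$: taking a geodesic to a \emph{nearest} vertex of $H$, the vertex $v_3$ satisfies $d_G(v_3,V(H))=t-3<t$. This is exactly the paper's Case 1 together with its reduction ``we can consider $k=0$''.

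The genuine gap is the base of your induction, $t\in\{1,2\}$ --- precisely the ``delicate point'' you flag but do not resolve. If you choose $y$ to realize $d_G(x,V(H))$, then $d_G(x,y)=t<3$ and there is no $v_3$ on the geodesic at all. If you instead use the far witness $z$ with $d_G(x,z)\geqslant 3$, the vertex $v_3$ on a geodesic from $x$ to $z$ need not lie in $V(H)$ (a geodesic of $G$ ending in $H$ may travel outside $H$, since $H$ is only $3$-induced, not isometric), and $d_G(v_3,V(H))$ can be as large as $t+3$; so the quantity you induct on can increase and the recursion need not terminate. The missing idea --- the actual content of the paper's Cases 2 and 3 --- is to search \emph{inside} $H$: let $y$ be a nearest vertex of $H$ to $x$ and $z$ a witness with $d_G(x,z)\geqslant 3$, take a path $y=y_0y_1\cdots y_s=z$ within $H$ (connectivity of $H$), and note that $d_G(x,y_0)\leqslant 2$, $d_G(x,y_s)\geqslant 3$, while consecutive values $d_G(x,y_i)$ differ by at most $1$; a discrete intermediate-value argument then produces some $y_i\in V(H)$ with $d_G(x,y_i)=3$, giving a $D_3(G)$-edge from $x$ directly into $V(H)$. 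With that base case supplied, your induction closes the proof.
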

\begin{proof}
Let $x\in V(G)\setminus V(H)$ , $d(x,H)=t$,  $y\in V(H)$ and, $x=x_0x_1\cdots x_t=y$ be a path. We consider three cases for $t$.

Case 1 : $t=3k$.
Then $x=x_0x_3\cdots x_{3k}=y$  is a path in $D_3(G)$.

Case 2 : $t=3k+1$.
Since $x=x_0x_3\cdots x_{3k}$  is a path in $D_3(G)$, we can consider $k=0$. By the hypothesis there is $z\in V(H)$ such that $d(x,z)\geqslant3$. By connectivity of $H$, there is a path $x_1=y=y_0y_1\cdots y_s=z$ in $H$. We claim that there is $y_i$ such that $d(x,y_i)=3$. By the contrary, since $d(x,y_1)\leqslant2$ and $d(x,y_2)\leqslant3$, then $d(x,y_2)\leqslant2$. Similarly, $d(x,y_3)\leqslant2$, $d(x,y_4)\leqslant2$, $\cdots$. Consequently $d(x,y_t)\leqslant2$, a contradiction.

Case 3 : $t=3k+2$.
Since $x=x_0x_3\cdots x_{3k}$  is a path in $D_3(G)$, we can consider $k=0$. By the hypothesis there is $z\in V(H)$ such that $d(x,z)\geqslant3$. By connectivity of $H$, there is a path $x_{2}=y=y_0y_1\cdots y_s=z$ in $H$. We claim that there is $y_i$ such that $d(x,y_i)=3$. By the contrary, since $d(x,y_0)=2$ and $d(x,y_1)\leqslant3$, then $d(x,y_1)=2$. Similarly, $d(x,y_2)=2$, $d(x,y_3)=2$, $\cdots$. Consequently $d(x,y_t)=2$, a contradiction.
\end{proof}

\begin{corollary}\label{cor23}
Let $G$ be a connected graph, $H\leqslant G$, $H$ contains two vertices $a$ and $b$ such that $d_H(a,b)=d_G(a,b)=5$. If $H$ is $3$-induced and $D_3(H)$ is connected, then $D_3(G)$ is connected.
\end{corollary}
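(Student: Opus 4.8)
The plan is to reduce the statement directly to Lemma \ref{lem22}. Two of the three hypotheses of that lemma are already granted here, namely that $H$ is $3$-induced and that $D_3(H)$ is connected, so the only thing left to check is the reachability condition: for every $x\in V(G)\setminus V(H)$ there exists $y\in V(H)$ with $d(x,y)\geqslant 3$. Once this is verified, I simply invoke Lemma \ref{lem22} and conclude that $D_3(G)$ is connected.

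To verify this condition I would fix an arbitrary $x\in V(G)\setminus V(H)$ and use the two distinguished vertices $a,b\in V(H)$ with $d_G(a,b)=5$. The key observation is that $x$ cannot be simultaneously close to both of them. Indeed, suppose for contradiction that $d(x,a)\leqslant 2$ and $d(x,b)\leqslant 2$. Then the triangle inequality in $G$ gives
\[
d_G(a,b)\leqslant d(a,x)+d(x,b)\leqslant 2+2=4,
\]
contradicting $d_G(a,b)=5$. Hence at least one of $d(x,a)$, $d(x,b)$ is at least $3$, and we may take $y$ to be whichever of $a,b$ realizes this. Since $x$ was arbitrary, the hypothesis of Lemma \ref{lem22} holds for the pair $(G,H)$.

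There is essentially no obstacle to overcome: the entire argument is a single application of the triangle inequality. The only point worth stressing is why the value $5$ is the right threshold. A vertex at distance $\leqslant 2$ from both $a$ and $b$ would force $d_G(a,b)\leqslant 4$, so requiring $d_G(a,b)\geqslant 5$ is exactly what guarantees that no outside vertex is within distance $2$ of the whole pair, which is precisely the condition Lemma \ref{lem22} needs. I would note, if desired, that the same proof works verbatim under the weaker assumption $d_G(a,b)\geqslant 5$; the exact value $5$ is only used through the inequality $d_G(a,b)>4$.
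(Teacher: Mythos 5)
Your proof is correct and is exactly the intended derivation: the paper states this corollary without proof immediately after Lemma \ref{lem22}, the point being precisely that $d_G(a,b)=5$ forces, via the triangle inequality, every vertex outside $H$ to be at distance at least $3$ from one of $a,b$, so Lemma \ref{lem22} applies. Your added remark that $d_G(a,b)\geqslant 5$ suffices is also accurate.
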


\begin{lemma}\label{lem24}
Let $H_t$ be the following graph.
\begin{figure}[h]
\begin{center}
\begin{tikzpicture}
\draw (0.0,0)--(4,0);\draw[dashed] (4,0)--(5,0);\draw (5,0)--(9,0);
\foreach \x in{0,1,...,9} \filldraw (\x,0) circle(2pt);
\draw (2,0)--(2,1);\draw (7,0)--(7,1);
 \filldraw (2,1) circle(2pt); \filldraw (7,1) circle(2pt);
\node at (0,-.3) {$a_1$};\node at (1,-.3) {$a_2$};\node at (2,1.3) {$a_3$};
\node at (9,-.3) {$b_1$};\node at (8,-.3) {$b_2$};\node at (7,1.3) {$b_3$};
\node at (2,-.3) {$u_0$};\node at (3,-.3) {$u_1$};\node at (4,-.3) {$u_2$};
\node at (5,-.3) {$u_{t-2}$};\node at (6,-.3) {$u_{t-1}$};\node at (7,-.3) {$u_t$};
\end{tikzpicture}
\end{center}
\caption{ $H_t$}
\end{figure}
If $gcd(t,3)\ne1$, then $D_3(H_t)$ is connected.

\end{lemma}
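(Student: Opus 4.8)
The plan is to make $D_3(H_t)$ completely explicit and then reduce its connectivity to a short arithmetic check on the residue of $t$ modulo $3$. Writing $P=u_0u_1\cdots u_t$ for the central spine, I would first note that $H_t$ is a tree, so every distance is read off the unique path, and that the only distance-$3$ pairs lying inside $P$ are $u_iu_{i+3}$ for $0\le i\le t-3$. Hence, under the $D_3$-relation, the spine breaks into the three residue classes $R_j=\{u_i:\, i\equiv j \pmod 3\}$ with $j\in\{0,1,2\}$, and each $R_j$ is itself a single path in $D_3(H_t)$. The whole question then becomes whether the two pendant gadgets at the ends of $P$ splice these three classes together into one component.

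Next I would compute the distance-$3$ edges produced by the gadgets. Using $d(a_1,u_0)=2$ and $d(a_2,u_0)=d(a_3,u_0)=1$, the left gadget contributes exactly $a_1u_1$, $a_2u_2$, $a_3u_2$ and $a_1a_3$; thus $\{a_1,a_2,a_3\}$ forms a connected fragment of $D_3(H_t)$ that is pinned to $R_1$ through $u_1$ and to $R_2$ through $u_2$. By the symmetry of $H_t$ the right gadget $\{b_1,b_2,b_3\}$ contributes $b_1u_{t-1}$, $b_2u_{t-2}$, $b_3u_{t-2}$ and $b_1b_3$, pinning a connected fragment to $R_{(t-1)\bmod 3}$ and to $R_{(t-2)\bmod 3}$. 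I would then introduce an auxiliary three-vertex ``class graph'' whose vertices are $R_0,R_1,R_2$ and whose edges are the links supplied by the two gadgets, and observe that $D_3(H_t)$ is connected exactly when this class graph is connected. Finally, invoking the hypothesis $\gcd(t,3)\ne1$, I would run through the admissible residues of $t$ and verify that the gadget-edges tie all three classes together.

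The step I expect to be the crux is this final case analysis: for each residue of $t$ permitted by $\gcd(t,3)\ne1$ one must check that the two edges contributed to the class graph by the left and right gadgets together connect $R_0$, $R_1$ and $R_2$, so that no residue class — in particular the one containing the spine endpoints $u_0$ and $u_t$ — is left isolated. This is precisely where the divisibility condition must be used in an essential way, and it is the part that demands the most care. A secondary technical point is the small-$t$ regime, in which the two gadgets sit close enough to create additional short-range distance-$3$ edges directly between them (for instance $u_1b_2$, $u_1b_3$ and $u_2b_1$); these can only aid connectivity but must be tracked so that the two spine endpoints are correctly accounted for. Everything else in the argument is routine bookkeeping of distances in the tree $H_t$.
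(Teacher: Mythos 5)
Your structural analysis of $D_3(H_t)$ is accurate: the spine does split into the residue classes $R_0,R_1,R_2$, each inducing a path; the left gadget contributes exactly the edges $a_1u_1$, $a_2u_2$, $a_3u_2$, $a_1a_3$, and the right gadget the mirror edges $b_1u_{t-1}$, $b_2u_{t-2}$, $b_3u_{t-2}$, $b_1b_3$. But the step you yourself flag as the crux --- the final run through the ``admissible residues'' --- does not go through under the hypothesis as printed, and no repair is possible. Since $\gcd(t,3)\ne 1$ means precisely $3\mid t$, the only admissible residue is $t\equiv 0\pmod 3$; then $t-1\equiv 2$ and $t-2\equiv 1$, so \emph{both} gadgets splice $R_1$ to $R_2$, and $R_0$ receives no edge of your class graph. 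Indeed $R_0=\{u_0,u_3,\dots,u_t\}$ is genuinely a connected component of $D_3(H_{3k})$: one checks $d(a_1,u_m)=m+2$, $d(a_2,u_m)=d(a_3,u_m)=m+1$, $d(b_1,u_m)=t-m+2$, $d(b_2,u_m)=d(b_3,u_m)=t-m+1$, so a gadget vertex is at distance $3$ only from $u_1,u_2,u_{t-1},u_{t-2}$, none of which lies in $R_0$ when $3\mid t$; the short-range cross edges you mention (e.g.\ $u_1b_2$, $u_2b_1$ at $t=3$) likewise avoid $R_0$. So $D_3(H_t)$ is \emph{disconnected} for every $t\equiv0\pmod3$, and the statement you were asked to prove is false as printed.

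The resolution is that the lemma's hypothesis carries a typo: it should read $\gcd(t,3)=1$. The paper's own proof confirms this silently --- it treats exactly the cases $t=3k+1$ and $t=3k+2$, exhibiting the explicit $D_3$-paths $a_1u_1u_4\cdots u_t$, $a_2u_2u_5\cdots u_{t-2}b_2$, $u_0u_3\cdots u_{t-1}b_1$ together with the connectors $a_1a_3u_2$ and $b_1b_3u_{t-2}$ --- and the lemma is invoked in Theorem 2.7 only for such $t$ (two inner nodes at distance $3k+1$ or $3k+2$). Under that corrected hypothesis your argument closes cleanly: for $t\equiv1\pmod3$ the right gadget joins $R_0$ to $R_2$, for $t\equiv2\pmod3$ it joins $R_0$ to $R_1$, and in both cases the class graph is connected; your three residue-class paths glued by the gadget edges are exactly the paper's explicit paths, so modulo the hypothesis your route and the paper's are essentially the same decomposition. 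The genuine defect in your proposal is the unexecuted verification: you asserted the divisibility condition $\gcd(t,3)\ne1$ would make the class graph connected, when in fact it fails for the unique residue it permits, and carrying out your own case analysis would have exposed the error in the statement rather than proved it.
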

\begin{proof}
First, we assume that $t=3k+1$ for some integer $k$.  Obviously,  $a_1u_1u_4\cdots u_t$,  $a_2u_2u_5\cdots u_{t-2}b_2$, $u_0u_3\cdots u_{t-1}b_1$,     $a_1a_3u_2$ and     $b_1b_3u_{t-2}$ are paths in $D_3(H_t)$ which shows that  $D_3(H_t)$ is connected.
The case $t=3k+2$  is simillar to first case.

\end{proof}

For positive integers  $l$, $m$ and $n$, the generalized double star $GDS(l,m,n)$ is the graph consisting of the union of two stars $K_{1,m}$ and $K_{1,n}$ together with a path of the length $l$ joining their centers. In the special case, $GDS(1,m,n)$ is a double star graph.

The following theorem will charactrize all trees with connected $3$-distance graph. Here, we need  some special vertices that we call them \textit{inner nodes}.
In a graph $G$, we say the vertex $u\in V(G)$ is an \textit{inner node} if $deg(u)\geqslant3$ and $u$ has at least two neighbours with degree at least 2.

\begin{theorem}\label{the27}
Let $G$ be a tree. $D_3(G)$ is connected if and only if $G$ has two inner nodes with distance $3k+1$ or $3k+2$ for some integer $k$.
\end{theorem}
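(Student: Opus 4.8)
The plan is to prove both directions of the biconditional, and in each case the engine will be Lemma~\ref{lem22}: a tree has connected $3$-distance graph exactly when it contains a small $3$-induced subgraph whose own $3$-distance graph is connected and which is ``far enough'' from every outside vertex. First I would establish the easy direction. Suppose $G$ has two inner nodes $u$ and $v$ with $d_G(u,v) \equiv 1$ or $2 \pmod 3$. The idea is to build a subgraph $H \leqslant G$ of the type $H_t$ appearing in Lemma~\ref{lem24}: take the $u$--$v$ path of length $t = d_G(u,v)$ as the spine $u_0 u_1 \cdots u_t$, and attach to each endpoint two pendant structures. Since $u$ is an inner node it has degree $\geqslant 3$ with two neighbours of degree $\geqslant 2$; one such neighbour lies on the spine, and I can use the other high-degree neighbour together with one more neighbour of $u$ to produce the required $a_1,a_2,a_3$ configuration (and symmetrically $b_1,b_2,b_3$ at $v$). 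Because $G$ is a tree, all these distances are realized by unique paths, so $H$ is automatically $3$-induced (indeed induced: in a tree every subtree is isometric). Then $\gcd(t,3)\ne 1$ holds precisely because $t \equiv 1$ or $2 \pmod 3$ means $t$ is coprime to $3$\,---\,wait, that is the opposite, so I would be careful here: $\gcd(t,3)\ne 1$ means $3 \mid t$, whereas the hypothesis gives $t \equiv 1,2 \pmod 3$. This mismatch tells me Lemma~\ref{lem24} must be applied to a \emph{reparametrized} spine, or that the relevant subgraph is not literally $H_t$; I expect I will need to adjust the attachment so that the path length fed into Lemma~\ref{lem24} is divisible by $3$, or invoke the lemma's $t=3k+1,3k+2$ connectivity conclusion directly rather than its hypothesis. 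I would reconcile this by rereading Lemma~\ref{lem24}'s statement: its proof treats $t=3k+1$ and $t=3k+2$, so its conclusion actually covers exactly the cases I want, and the hypothesis ``$\gcd(t,3)\ne 1$'' in the lemma should be read as the complementary condition; I would align notation accordingly.

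Having produced $H$ with $D_3(H)$ connected, the remaining work for the easy direction is the second hypothesis of Lemma~\ref{lem22}: for every $x \in V(G)\setminus V(H)$ there is some $y \in V(H)$ with $d_G(x,y)\geqslant 3$. In a tree this is where I would use the endpoints $a_1, b_1$ of $H$, which are at distance $t + (\text{small constant}) \geqslant 5$ apart. For any outside vertex $x$, since $d_G(a_1,b_1)$ is large, $x$ cannot be within distance $2$ of both $a_1$ and $b_1$ by the triangle inequality, so one of them serves as the required $y$. This is exactly the mechanism of Corollary~\ref{cor23} (two vertices at mutual distance $5$), and I anticipate citing that corollary directly once $H$ is shown to contain such a pair. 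Applying Lemma~\ref{lem22} (or Corollary~\ref{cor23}) then yields that $D_3(G)$ is connected.

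For the converse (the contrapositive is cleaner), I would assume $G$ has no two inner nodes at distance $\equiv 1,2 \pmod 3$ and show $D_3(G)$ is disconnected. This splits into subcases. If $G$ has at most one inner node, then $G$ is structurally very restricted\,---\,essentially a ``spider'' / subdivided star or a generalized double star $GDS(l,m,n)$\,---\,and I would compute $D_3(G)$ directly and exhibit the disconnection (for a star or path-like tree the distance-$3$ relation typically isolates the central cluster from the leaves, or partitions vertices by residue class of their depth modulo $3$). If $G$ has two or more inner nodes but all pairwise distances between inner nodes are $\equiv 0 \pmod 3$, then the main obstacle\,---\,and I expect this to be the technical heart of the proof\,---\,is to show that the residue of a vertex's distance to a fixed inner node modulo $3$ is a well-defined invariant preserved along every $D_3$-edge, giving a partition of $V(G)$ into classes that $D_3(G)$ cannot connect across. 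Concretely, a $D_3$-edge changes distance-to-basepoint by $3$ or by $1$ only when the path ``turns'' at a branch vertex; I must verify that turning can only occur at inner nodes, and that the all-distances-$\equiv 0$ hypothesis forces every turn to preserve the residue class. Controlling the behaviour at low-degree branch points (degree-$\geqslant 3$ vertices that fail to be inner nodes, i.e.\ having at most one neighbour of degree $\geqslant 2$) is the delicate bookkeeping I would need to get right, since those are the vertices whose attached leaves could otherwise bridge two classes.
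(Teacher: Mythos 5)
Your overall strategy coincides with the paper's: the ``if'' direction builds a copy of $H_t$ around the two inner nodes and feeds it to Lemmas~\ref{lem24} and~\ref{lem22} (your triangle-inequality argument with the two far-apart leaves $a_1,b_1$ is exactly the mechanism of Corollary~\ref{cor23}), and the ``only if'' direction separates $D_3(G)$ by distance classes modulo $3$ measured from an inner node. Your observation that the hypothesis ``$\gcd(t,3)\ne 1$'' of Lemma~\ref{lem24} is stated backwards relative to its own proof (which treats $t=3k+1$ and $t=3k+2$) is correct, and your resolution is the right one.

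The one step that would fail as written is in the converse. You propose to show that the residue of $d(\cdot,a)$ modulo $3$, for $a$ a fixed inner node, is ``preserved along every $D_3$-edge,'' so that all three residue classes are unions of components. That is false: if $x$ and $y$ lie on different branches at $a$ with $d(a,x)=1$ and $d(a,y)=2$, then $d(x,y)=3$, so a $D_3$-edge can join class $1$ to class $2$. What is true --- and is all the paper uses --- is that the single class $\{b: d(a,b)\equiv 0 \pmod 3\}$ is closed under $D_3$-adjacency. Indeed, for a $D_3$-edge $xy$ let $m$ be the median of $a,x,y$, so $d(m,x)+d(m,y)=3$ and $d(a,x)-d(a,y)=d(m,x)-d(m,y)$. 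Either the split is $\{0,3\}$ and the residue is preserved, or it is $\{1,2\}$, in which case $m$ is either $a$ itself or a vertex with three branches, two of which (toward $a$ and toward the farther of $x,y$) begin with neighbours of degree at least $2$; hence $m$ is an inner node, hence $d(a,m)\equiv 0$, forcing $d(a,x)$ and $d(a,y)$ to be $\equiv 1$ and $\equiv 2$ in some order. Either way no $D_3$-edge leaves the $0$-class, which is a proper nonempty subset, so $D_3(G)$ is disconnected. This same computation settles your worry about degree-$\geqslant 3$ vertices that are not inner nodes: a $\{1,2\}$-turn at $m$ forces two neighbours of $m$ of degree at least $2$, so such vertices cannot host one. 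With the claim weakened to the $0$-class only, your converse matches the paper's Case~3; the at-most-one-inner-node cases that you leave as ``direct computation'' are handled in the paper by classifying such trees as stars and generalized double stars and exhibiting the analogous residue-class component there.
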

\begin{proof}$''\Rightarrow''$
By contradiction let us assume that $D_3(G)$ is connected and  the distance of any two inner nodes is a multiple of 3. We can consider  following cases.\\
Case 1 : 
$G$ has no inner node. Then $G$ is a star or generalized double star. Let $a$ be vertex of $G$ with degree $1$ and $b$ be the unique neighbour of $a$. One can see that $a$ and $b$ lie in two distinct connected components of $D_3(G)$ which shows that $D_3(G)$ is disconnected, a contradiction.\\
Case 2 :
$G$ has a unique inner node $a$. Then by the structure of graph, $G-a$ is the union of connected components which are $K_1$, $K_2$, Stars or $GDS(l,1,n)$ for some $n$. Let $H$ be a connected component of $G-a$. Let $L=\langle H\cup\{a\}\rangle$. Then  $L$ is $K_2$, Stars or $GDS(l,1,n)$ for some $l,n$.
It is obviously that $D_3(L)\leqslant D_3(G)$ and for any  $b\in V(L)$, $a$ and $b$ are in a connected component of $D_3(L)$ if and only if $d(a,b)=3k$ for some integer $k$.
Also, if  $b\in V(L)\setminus\{a\}$, $d(a,b)=3k$ and $c\not\in V(L)$ then $d(b,c)\geqslant4$ which showes that $a$ and $b$ are in a connected component of $D_3(G)$ if and only if $d(a,b)=3k$ for some integer $k$.
Therefore $D_3(G)$ is disconnected, a contradiction.\\
Case 3 :
$G$ has atleast two inner nodes. 
Let $a$ be an inner node of $G$. Obviously, the  subgraph $\langle\{b: d(a,b)=3k, \text{for some\;} k\}\rangle$ is a connected component, a contradiction.

\noindent $''\Leftarrow''$
By the assumption, $G$ has an induced subgraph $H$ isomorphic to $H_t$ for some $t=3k+1$ or $t=3k+2$. By Lemma \ref{lem24}, $D_3(H)$ is connected. 
Since $G$ is a tree and $H$ contains a path with length of at least $5$, for any $y\in V(G)\setminus V(H)$ there is $x\in V(H)$ such that $d(x,y)\geqslant3$. Then by Lemma \ref{lem22}, $D_3(G)$ is connected. 
\end{proof}

A connected graph $G$ is called  a unicyclic graph if it is contained exactly one cycle. For charactrizing all unicyclic graph with connected $3$-distance graph we consider $5$ cases respect to the length of the unique cycle of graph. We start with the simplest case, unicyclic graphs with $C_n, n\geqslant7$ and $gcd(n,3)=1$.

\begin{theorem}\label{the28}
Let $G$ be a unicyclic graph with induced cycle $C_n$. If $n\geqslant7$ and $gcd(n,3)=1$, then $D_3(G)$ is connected.
\end{theorem}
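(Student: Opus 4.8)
The plan is to invoke Lemma \ref{lem22} with the unique cycle $H=C_n$ playing the role of the $3$-induced subgraph. To do this I must verify three things: that $C_n$ is a $3$-induced subgraph of $G$, that $D_3(C_n)$ is connected, and that every vertex of $G$ lying off the cycle has some cycle vertex at distance at least $3$. Write the cycle as $v_0v_1\cdots v_{n-1}v_0$.

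First I would check that $C_n$ is isometric in $G$, which is stronger than being $3$-induced. Since $G$ is unicyclic, every vertex outside the cycle lies in a tree attached to the cycle at a single vertex, and a tree contains no path joining two distinct cycle vertices without backtracking. Hence any geodesic between two cycle vertices may be taken along the cycle, so $d_G(v_i,v_j)=d_{C_n}(v_i,v_j)$ for all $i,j$. In particular distances up to $3$ agree, so $C_n$ is $3$-induced.

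Next I would describe $D_3(C_n)$ explicitly. For $n\geqslant7$ one has $d_{C_n}(v_i,v_j)=3$ precisely when $j\equiv i\pm3\pmod n$, and these two indices are distinct (since $6\not\equiv0\pmod n$). Thus $D_3(C_n)$ is the circulant graph on $\mathbb{Z}_n$ with connection set $\{\pm3\}$, which is $2$-regular. Following the edges $i\mapsto i+3$ traces the sequence $0,3,6,\ldots$ in $\mathbb{Z}_n$, which visits every residue exactly once precisely because $\gcd(n,3)=1$ forces $3$ to generate $\mathbb{Z}_n$. Consequently $D_3(C_n)$ is a single $n$-cycle, hence connected. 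This is the only place where both hypotheses $n\geqslant7$ (so that distance $3$ is actually attained and yields two distinct neighbours) and $\gcd(n,3)=1$ (so that the component is all of $\mathbb{Z}_n$) are used, and it is the crux of the argument.

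Finally I would verify the remaining hypothesis of Lemma \ref{lem22}. Let $x\in V(G)\setminus V(C_n)$ and let $v_i$ be the cycle vertex closest to $x$; then $d_G(x,v_j)=d_G(x,v_i)+d_{C_n}(v_i,v_j)$ for every cycle vertex $v_j$. Choosing $v_j$ diametrically opposite $v_i$ gives $d_G(x,v_j)\geqslant1+\lfloor n/2\rfloor\geqslant4$, since $n\geqslant7$. Thus there is a cycle vertex at distance at least $3$ from $x$, so all hypotheses of Lemma \ref{lem22} hold and $D_3(G)$ is connected. The only genuine obstacle is the connectivity of $D_3(C_n)$ in the third step; the isometry and distance conditions follow routinely from the unicyclic structure and the largeness of the girth.
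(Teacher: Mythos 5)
Your proof is correct and follows essentially the same route as the paper: take the unique cycle as a $3$-induced subgraph, use $\gcd(n,3)=1$ to see that stepping by $3$ generates $\mathbb{Z}_n$ so that $D_3(C_n)$ is connected, and then apply Lemma \ref{lem22}. You are somewhat more explicit than the paper in verifying the isometry of the cycle and the existence of a cycle vertex at distance at least $3$ from each outside vertex, but these are details the paper treats as routine, not a different argument.
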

\begin{proof}
Let $H=x_0x_1x_2\cdots x_{n-1}x_0$ be the unique cycle of $G$.
Since $gcd(3,n)=1$,  $\overline{0},\overline{3},\cdots , \overline{3(n-1)}$ are distinct,  where $\overline{i}$ is the remainder of $i$ modulo $n$. Then  $x_{\overline{0}}x_{\overline{3}}\cdots x_{\overline{3n-3}}$ is a path with $n$ vertices in $D_3(H)$, which shows $D_3(H)$ is connected. Since $G$ is unicyclic and $n\geqslant6$, for any $a\in V(G)\setminus V(H)$ there is $b\in V(H)$ such that $d(a,b)\geqslant3$. Hence, by Lemma \ref{lem22}, $D_3(G)$ is connected, and the proof is completed. 
\end{proof}


Now, we consider the unicyclic graphs with $C_n, n\geqslant6$ and $gcd(n,3)=3$.

\begin{theorem}\label{the29}
Let $G$ be a unicyclic graph with induced cycle $C_n$. If $n\geqslant6$ and $gcd(n,3)=3$, then $D_3(G)$ is connected if and only if $G$ has two inner nodes with distance $3k+1$ or $3k+2$ for some integer $k$.
\end{theorem}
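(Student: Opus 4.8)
The plan is to mirror the tree case (Theorem \ref{the27}) but with the cycle supplying three residue classes that must be glued together. Fix an orientation $C_n=x_0x_1\cdots x_{n-1}x_0$ and define a colouring $r\colon V(G)\to\mathbb{Z}_3$ by $r(x_i)=i\bmod 3$ on the cycle and $r(w)=(i-\delta)\bmod 3$ for a vertex $w$ lying in the tree hanging at $x_i$ at tree-distance $\delta$ from $x_i$. Because $3\mid n$ and $n\geqslant 6$, the three colour classes $S_0,S_1,S_2$ are non-empty, and each is connected in $D_3(G)$: its cycle part is a cycle $C_{n/3}$ (an edge when $n=6$), and every tree vertex of that colour is joined to this part by a monochromatic $D_3$-path running inward. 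Thus $D_3(G)$ is connected precisely when $S_0,S_1,S_2$ all lie in one $D_3(G)$-component, and the problem becomes: which pairs of classes get merged?

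First I would prove a \emph{merger lemma}: every inner node $z$ forces the two classes $r(z)-1$ and $r(z)+1$ into one component of $D_3(G)$. Indeed $z$ has a neighbour $w$ and two further neighbours $a,b$ of degree at least $2$; following $a$ and $b$ one more step reaches vertices $a',b'$ with $d(w,a')=d(w,b')=3$, and a direct check (using $n\geqslant 6$ and unicyclicity to rule out shortcuts) gives $r(a')=r(z)-1$ and $r(b')=r(z)+1$, so $w$ is $D_3$-adjacent to one vertex of each of the two classes $\neq r(z)$. Conversely I would verify that every bichromatic $D_3$-edge is of this form, i.e.\ it joins two classes both different from $r(z_0)$ for some inner node $z_0$ on its length-$3$ geodesic; hence no $D_3$-edge joins $S_{r(z_0)}$ to the outside except through mergers at inner nodes.

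The second ingredient is a \emph{dictionary}: the set of all inner nodes is monochromatic for $r$ if and only if every two inner nodes are at distance $\equiv 0 \pmod 3$. The forward implication is immediate since $n\equiv 0$. For the converse, note that a cycle vertex carrying a tree is automatically an inner node (its two cycle-neighbours have degree $\geqslant2$), so if all pairwise distances are $\equiv0\pmod3$ then all attachment points sit at positions $\equiv\gamma\pmod 3$ for one fixed $\gamma$; moreover every inner node inside a tree, and every lowest common ancestor of two inner nodes, has tree-depth $\equiv0\pmod3$ from its attachment point, whence colour $\gamma$. Equivalently, having two inner nodes at distance $3k+1$ or $3k+2$ is the same as having two inner nodes of different colours.

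The two directions then fall out. For $''\Rightarrow''$ I argue by contraposition: if no two inner nodes are at distance $\not\equiv0\pmod3$, the dictionary makes all inner nodes one colour $\gamma$, and the merger lemma shows every bichromatic $D_3$-edge lives inside $S_{\gamma-1}\cup S_{\gamma+1}$; thus $S_\gamma$ is a union of full components and $D_3(G)$ is disconnected. For $''\Leftarrow''$, two inner nodes at distance $\not\equiv0$ have distinct colours, so the merger lemma glues two different pairs of classes, covering all three, and with the class-connectivity above this yields a connected $D_3(G)$; alternatively one packages the cycle together with short branches realizing the two mergers into a $3$-induced subgraph $H$ with $D_3(H)$ connected and finishes with Lemma \ref{lem22} exactly as in Theorem \ref{the28}. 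The main obstacle is the distance bookkeeping underpinning the merger lemma and the dictionary: one must check that bichromatic $D_3$-edges are \emph{exactly} the inner-node mergers and never leak a class-$\gamma$ vertex out, treating uniformly inner nodes on the cycle and deep inside trees, both ways around the cycle, and the degenerate small cases $n=6,9$ where the classes are very small.
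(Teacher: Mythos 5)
Your proposal is correct, but it takes a genuinely different route from the paper, at least in the forward direction. The paper's proof of $''\Rightarrow''$ simply asserts that, when every two inner nodes are at distance $\equiv 0\pmod 3$, the set $\langle\{b:d(a,b)=3k\}\rangle$ around an inner node $a$ is ``obviously'' a connected component (and handles $G=C_{3k}$ separately); your $\mathbb{Z}_3$-colouring together with the classification of bichromatic $D_3$-edges is in effect a rigorous justification of that assertion, and it is the more informative argument. The step you defer --- that every bichromatic $D_3$-edge is an inner-node merger --- does go through: every edge of $G$ changes $r$ by exactly $\pm1$, a geodesic in a unicyclic graph is an ascending part, a monotone cycle part and a descending part, so its colour sequence has at most one ``turn''; a length-$3$ geodesic with nonzero total colour change therefore has exactly one turn, the turn vertex has two path-neighbours of colour one less than its own (forcing it to be an inner node, since it also has a parent or two cycle-neighbours of degree at least $2$), and the endpoints then lie in the two classes other than the turn vertex's class. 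Two small repairs are needed. First, in the merger lemma you must choose $a'$ in the ancestor/cycle direction and $b'$ as a grandchild: if $a,b$ are both children of $z$ and $a',b'$ are taken below them, both get colour $r(z)+1$ and nothing is merged; the correct choices always exist because an inner node off the cycle has a non-leaf child and a grand-ancestor, and one on the cycle has $x_{i\pm2}$. Second, the two directions of your ``dictionary'' are mislabelled: it is ``monochromatic $\Rightarrow$ all distances $\equiv0\pmod3$'' (the direction needed for $''\Leftarrow''$) that requires the lowest-common-ancestor observation, since two same-coloured inner nodes in one hanging tree satisfy $d(u,v)=\delta_u+\delta_v-2\ell$ and one must know $\ell\equiv0$; the attachment-point argument alone handles the other direction. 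For $''\Leftarrow''$ the paper instead exhibits an explicit $3$-induced subgraph ($H_t$, or one of two special graphs when $n=6$ and both inner nodes lie on the cycle) and invokes Lemmas \ref{lem24} and \ref{lem22}; you mention this alternative yourself, and either finish works. What your approach buys is a uniform, fully justified forward direction with no case split at $n=6$; what the paper's buys is brevity and reuse of Lemma \ref{lem22}.
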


\begin{proof}$''\Rightarrow''$
By contradiction let us assume that $D_3(G)$ is connected and  the distance of any two inner nodes is a multiple of 3. We can consider  following cases.\\
Case 1 : 
$G$ has no inner node. Then $G$ is a cycle graph $x_0x_1x_2\cdots x_{n-1}x_0$ where $n=3k$ for some integer $k$. Then  $\langle x_0,x_3,\cdots, x_{n-3}\rangle$ is a connected component of $D_3(G)$, which shows that $D_3(G)$ is disconnected, a contradiction.\\
Case 2 :
$G$ has atleast an inner node $a$. Obviously, the  subgraph $\langle\{b: d(a,b)=3k, \text{for some\;} k\}\rangle$ is a connected component, a contradiction.
\\
\noindent
$''\Leftarrow''$
Two following cases is considrable.
Let $G$ has two inner nodes $a$ and $b$ with distance $t=3k+1$ or $t=3k+2$.
\\
Case 1 : $n>6$ or both of $a$ and $b$ are not on the unique cycle of $G$. Then  $G$ has an induced subgraph $H$ isomorphic to $H_t$, for some $t\in\{3k+1,3k+2\}$. By Lemma \ref{lem24}, $D_3(H)$ is connected. By assumption for any $y\in V(G)\setminus V(H)$ there is $x\in V(H)$ such that $d(x,y)\geqslant3$. Then by Lemma \ref{lem22}, $D_3(G)$ is connected. 
\\
Case 2 : 
$n=6$ and $a$ and $b$ lie on the unique cycle of $G$. Then $G$ has an induced subgraph $H$ isomorphic to one of the following graphs.

\begin{center}
\begin{tabular}{ccccc}
\begin{tikzpicture}
\draw (0,0)--(0.5,-.85)--(1.5,-.85)--(2,0)--(1.5,.85)--(0.5,.85)--(0,0);
\draw (-.5,-.85)--(.5,-.85);
\draw (-.5,.85)--(.5,.85);
\filldraw (0,0) circle(2pt);\filldraw (0.5,-.85) circle(2pt);\filldraw (1.5,-.85) circle(2pt);\filldraw (2,0) circle(2pt);\filldraw (1.5,.85) circle(2pt);
\filldraw (0.5,.85) circle(2pt);\filldraw (-.5,-.85) circle(2pt);\filldraw (-.5,.85) circle(2pt);
\end{tikzpicture}
&&&&
\begin{tikzpicture}
\draw (0,0)--(0.5,-.85)--(1.5,-.85)--(2,0)--(1.5,.85)--(0.5,.85)--(0,0);
\draw (2,0)--(3,0);
\draw (2.5,.85)--(1.5,.85);
\filldraw (0,0) circle(2pt);\filldraw (0.5,-.85) circle(2pt);\filldraw (1.5,-.85) circle(2pt);\filldraw (2,0) circle(2pt);\filldraw (1.5,.85) circle(2pt);
\filldraw (0.5,.85) circle(2pt);\filldraw (3,0) circle(2pt);\filldraw (2.5,.85) circle(2pt);
\end{tikzpicture}
\end{tabular}
\end{center}

One can see that $D_3(H)$ is connected. Similar to the previous case, $D_3(G)$ is connected. 
\end{proof}


Unicyclic graphs containing $C_5$ have more details, so it is necessary to first state some special modes that are expressed in three lemmas.

\begin{lemma}\label{lem210}
Let $G$ be a unicyclic graph. If $G$ has a subgraph isomorphic to one of the following graphs, then $D_3(G)$ is connected.
\end{lemma}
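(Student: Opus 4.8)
The plan is to reduce everything to the $3$-induced subgraph machinery of Lemma \ref{lem22}, exactly as in the proofs of Theorems \ref{the28} and \ref{the29}. For each graph $H$ appearing in the list I would take $H$ itself, sitting inside $G$ as an induced subgraph, as the test subgraph, and verify the three hypotheses of Lemma \ref{lem22}: that $H$ is $3$-induced in $G$, that $D_3(H)$ is connected, and that every vertex of $V(G)\setminus V(H)$ is at $G$-distance at least $3$ from some vertex of $H$. Once all three hold, Lemma \ref{lem22} immediately delivers the conclusion that $D_3(G)$ is connected.

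First I would settle connectivity of $D_3(H)$ for each listed graph. Each is a small explicit configuration built from $C_5$ together with a few pendant edges or short pendant paths, so this is a finite verification: for each $H$ I would exhibit a family of paths in $D_3(H)$ whose union is a connected spanning subgraph, in the same style used for $H_t$ in Lemma \ref{lem24} and for the two six-cycle gadgets in Theorem \ref{the29}. The reason such gadgets are needed at all is that $D_3(C_5)$ is edgeless, since $diam(C_5)=2$; the bare cycle contributes nothing, and the pendant structure in each $H$ is precisely what creates the distance-$3$ pairs that knit the vertices of $H$ together in $D_3(H)$.

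Next I would check that each $H$ is $3$-induced in $G$. Since $G$ is unicyclic, its only cycle is the $C_5$ already contained in $H$, so for any two vertices $u,v\in V(H)$ a $G$-geodesic between them cannot profit from entering a pendant tree (that would revisit a cut vertex); hence when $d_G(u,v)\leq 3$ the geodesic lies entirely in $H$, giving $d_H(u,v)=d_G(u,v)$, which is the $3$-induced condition. For the distance hypothesis I would again use unicyclicity: any $x\in V(G)\setminus V(H)$ lies in a tree attached to $H$ at a single cut vertex $a\in V(H)$, so every $x$-to-$V(H)$ path passes through $a$. Because each listed $H$ has the property that every vertex has eccentricity at least $2$ inside $H$, there is some $y\in V(H)$ with $d_H(a,y)\geq 2$, whence $d_G(x,y)=d_G(x,a)+d_H(a,y)\geq 1+2=3$, as required.

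The main obstacle is the combination of the $3$-induced check with the gadget-by-gadget connectivity verification. Because $C_5$ has such small diameter, distances can be realized by wrapping around the cycle, so one must be careful that the chosen $H$ genuinely preserves all distances up to $3$ rather than only up to $2$; this is also exactly why only these particular minimal configurations appear in the list, since smaller or differently placed attachments either leave $D_3(H)$ disconnected or fail the distance hypothesis of Lemma \ref{lem22}. Once this finite bookkeeping over the listed graphs is completed, the passage from $H$ to $G$ via Lemma \ref{lem22} is routine.
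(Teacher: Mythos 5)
Your proposal is correct and follows essentially the same route as the paper: take the copy of the listed graph $H$ as a $3$-induced subgraph (forced by unicyclicity), verify $D_3(H)$ is connected by a finite check, confirm the distance hypothesis, and conclude via Lemma \ref{lem22}. In fact you supply more justification than the paper, which compresses the $3$-inducedness, the connectivity of each $D_3(H)$, and the distance condition into a single ``one can see'' sentence before invoking Lemma \ref{lem22}.
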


\begin{center}
\begin{tabular}{ccccc}
\begin{tikzpicture}
\draw (0,0)--(1,0)--(1.3,.75)--(.5,1.5)--(-.3,.75)--(0,0);
\draw (-1.3,0.75)--(-1,0)--(0,0);
\draw (1.3,.75)--(2.3,0.75)--(2,0);
\filldraw (0,0) circle(2pt);\filldraw (1.3,.75) circle(2pt);\filldraw (.5,1.5) circle(2pt);\filldraw (1,0) circle(2pt);\filldraw (-.3,.75) circle(2pt);
\filldraw (-1.3,0.75) circle(2pt);\filldraw (-1,0) circle(2pt);\filldraw (2.3,0.75) circle(2pt);\filldraw (2,0) circle(2pt);
\end{tikzpicture}
&&
\begin{tikzpicture}
\draw (0,0)--(1,0)--(1.3,.75)--(.5,1.5)--(-.3,.75)--(0,0);
\draw (-1,0)--(0,0);\draw (2,0)--(1,0);
\draw (1.3,.75)--(2.3,.75);\draw (-.3,.75)--(-1.3,.75);
\filldraw (0,0) circle(2pt);\filldraw (1.3,.75) circle(2pt);\filldraw (.5,1.5) circle(2pt);\filldraw (1,0) circle(2pt);\filldraw (-.3,.75) circle(2pt);
\filldraw (-1.3,0.75) circle(2pt);\filldraw (-1,0) circle(2pt);\filldraw (2.3,0.75) circle(2pt);\filldraw (2,0) circle(2pt);
\end{tikzpicture}
&&
\begin{tikzpicture}
\draw (0,0)--(1,0)--(1.3,.75)--(.5,1.5)--(-.3,.75)--(0,0);
\draw (-1.3,0.75)--(-1,0)--(0,0);\draw (2.3,0.75)--(2,0)--(1,0);
\filldraw (0,0) circle(2pt);\filldraw (1.3,.75) circle(2pt);\filldraw (.5,1.5) circle(2pt);\filldraw (1,0) circle(2pt);\filldraw (-.3,.75) circle(2pt);
\filldraw (-1.3,0.75) circle(2pt);\filldraw (-1,0) circle(2pt);\filldraw (2.3,0.75) circle(2pt);\filldraw (2,0) circle(2pt);
\end{tikzpicture}
\\
\\
\begin{tikzpicture}
\draw (0,0)--(1,0)--(1.3,.75)--(.5,1.5)--(-.3,.75)--(0,0);
\draw (-1,0)--(0,0);\draw (2,0)--(1,0);
\draw (1.3,.75)--(2.3,.75);\draw (-1,0)--(-1.3,.75);
\filldraw (0,0) circle(2pt);\filldraw (1.3,.75) circle(2pt);\filldraw (.5,1.5) circle(2pt);\filldraw (1,0) circle(2pt);\filldraw (-.3,.75) circle(2pt);
\filldraw (-1.3,0.75) circle(2pt);\filldraw (-1,0) circle(2pt);\filldraw (2.3,0.75) circle(2pt);\filldraw (2,0) circle(2pt);
\end{tikzpicture}
&&
\begin{tikzpicture}
\draw (0,0)--(1,0)--(1.3,.75)--(.5,1.5)--(-.3,.75)--(0,0);
\draw (-1,0)--(0,0);\draw (2,0)--(1,0);
\draw (0.5,1.5)--(1.5,1.5);\draw (-1,0)--(-1.3,.75);
\filldraw (0,0) circle(2pt);\filldraw (1.3,.75) circle(2pt);\filldraw (.5,1.5) circle(2pt);\filldraw (1,0) circle(2pt);\filldraw (-.3,.75) circle(2pt);
\filldraw (-1.3,0.75) circle(2pt);\filldraw (-1,0) circle(2pt);\filldraw (1.5,1.5) circle(2pt);\filldraw (2,0) circle(2pt);
\end{tikzpicture}
&&

\begin{tikzpicture}
\draw (1.3,0.75)--(1.6,1.5);
\draw (0,0)--(1,0)--(1.3,.75)--(.5,1.5)--(-.3,.75)--(0,0);
\draw (-1,0)--(0,0);
\draw (1.3,.75)--(2.3,0.75)--(2,0);
\filldraw (0,0) circle(2pt);\filldraw (1.3,.75) circle(2pt);\filldraw (.5,1.5) circle(2pt);\filldraw (1,0) circle(2pt);\filldraw (-.3,.75) circle(2pt);
\filldraw (1.6,1.5) circle(2pt);\filldraw (-1,0) circle(2pt);\filldraw (2.3,0.75) circle(2pt);\filldraw (2,0) circle(2pt);
\end{tikzpicture}
\\
\\
\begin{tikzpicture}
\draw (1.3,0.75)--(1.6,1.5);
\draw (0,0)--(1,0)--(1.3,.75)--(.5,1.5)--(-.3,.75)--(0,0);
\draw (-1,0)--(0,0);\draw (-1.3,.75)--(-.3,.75);
\draw (1.3,.75)--(2.3,0.75)--(2,0);
\filldraw (0,0) circle(2pt);\filldraw (1.3,.75) circle(2pt);\filldraw (.5,1.5) circle(2pt);\filldraw (1,0) circle(2pt);\filldraw (-.3,.75) circle(2pt);
\filldraw (1.6,1.5) circle(2pt);\filldraw (-1,0) circle(2pt);\filldraw (2.3,0.75) circle(2pt);\filldraw (2,0) circle(2pt);\filldraw (-1.3,.75) circle(2pt);
\end{tikzpicture}
&&
\end{tabular}
\end{center}

\begin{proof}
Let $H$ be the subgraph of $G$ isomorphic to one of the above graphs. One can see that $D_3(H)$ is connected. 
Since $G$ is unicyclic, $H$ is 3-induced, and for any $y\in V(G)\setminus V(H)$, there is $x\in V(H)$ such that $d(x,y)\geqslant3$. By Lemma \ref{lem22}, $D_3(G)$ is connected. 
\end{proof}

\begin{lemma}\label{lem211}
Let $G$ be a unicyclic graph with cycle $x_1x_2\cdots x_5x_1$ such that $\deg(x_1)\geqslant3$ and $\deg(x_3)\geqslant3$, $G$ has an inner node $u$ on  the connected component of $G-\{x_2 , x_3, x_4 , x_5\}$ containing $x_1$, with $d(u,x_1)=3k \text{ or } 3k+2$, then $D_3(G)$ is connected.
\end{lemma}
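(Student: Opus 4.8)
The plan is to produce a subgraph $H\leqslant G$ with $D_3(H)$ connected and then apply Lemma \ref{lem22}. Since $x_3$ lies on the cycle and $\deg(x_3)\geqslant3$, it has a neighbour $c$ off the cycle; since $\deg(x_1)\geqslant3$, the vertex $x_1$ has an off-cycle neighbour, so the component of $G-\{x_2,x_3,x_4,x_5\}$ containing $x_1$ is a nontrivial tree and we may fix a geodesic $x_1=p_0p_1\cdots p_m=u$, where $m=d(u,x_1)\in\{3k,3k+2\}$ and $p_1,\dots,p_m$ lie off the cycle. Because $u$ is an inner node, apart from its neighbour $p_{m-1}$ on this path it has a second neighbour $v$ with $\deg(v)\geqslant2$; choose a neighbour $v'$ of $v$ with $v'\neq u$ and a third neighbour $w$ of $u$. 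I take $H=\langle V(C_5)\cup\{c\}\cup\{p_0,\dots,p_m\}\cup\{v,v',w\}\rangle$. As $G$ is unicyclic and its only cycle $C_5$ lies inside the connected subgraph $H$, no geodesic between two vertices of $H$ leaves $H$, so $H$ is isometric in $G$; in particular it is $3$-induced and all distances below may be computed in $G$.

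Next I would list the $D_3$-edges of $H$, writing $\overline{i}$ for $i\bmod 3$ and $R_j=\{p_i:\overline{i}=j\}$. The spine contributes the edges $p_ip_{i+3}$, so each $R_j$ is $D_3$-connected. A direct ``walk to $x_1$ and then around the $5$-cycle'' computation gives the further $D_3$-edges $cx_1,\ cx_5,\ p_1x_3,\ p_1x_4,\ p_2x_2,\ p_2x_5$. Hence $x_3,x_4\in R_1$ and $x_2,x_5\in R_2$, while the chain $x_1=p_0\sim c\sim x_5\sim p_2$ shows that $R_0$ and $R_2$ lie in one $D_3$-component. Thus on the cycle side only $R_1$ may remain separate.

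The point of the inner node $u$ is to absorb $R_1$. From $d(v',u)=2$ and $d(v,u)=d(w,u)=1$ one gets the $D_3$-edges $v'p_{m-1},\ v'w,\ wp_{m-2}$ (and $vp_{m-2}$), so the $D_3$-path $p_{m-1}\,v'\,w\,p_{m-2}$ joins $R_{\overline{m-1}}$ to $R_{\overline{m-2}}$. If $m\equiv0$ this merges $R_2$ with $R_1$, and if $m\equiv2$ it merges $R_1$ with $R_0$; in either case $R_0,R_1,R_2$, together with $c$ and all cycle vertices, fall into a single $D_3$-component, so $D_3(H)$ is connected. This is precisely where the hypothesis $m\not\equiv1$ is used: when $m\equiv1$ the claw would merge $R_0$ with $R_2$, a pair already joined by $c$, leaving $R_1$ isolated.

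It remains to verify the second hypothesis of Lemma \ref{lem22}: for each $y\in V(G)\setminus V(H)$ there is $x\in V(H)$ with $d(x,y)\geqslant3$. Since $y$ hangs in a tree-branch off a single attachment vertex $z\in V(H)$, it suffices that $z$ have eccentricity at least $3$ within $H$; and indeed every vertex of $H$ has another $H$-vertex at distance $\geqslant3$, because $H$ contains a $5$-cycle with a pendant together with a path of length $m\geqslant2$ carrying a claw, so its radius is at least $3$. Lemma \ref{lem22} then yields that $D_3(G)$ is connected. I expect the main difficulty to be the careful bookkeeping ensuring that $c,v,v',w$ and the spine vertices are genuinely distinct and off the cycle, so that the distance identities above hold in every configuration, and in particular the degenerate boundary case $u=x_1$ (i.e.\ $m=0$): here $p_{m-1},p_{m-2}$ do not exist and the claw collapses onto the cycle, so this case falls outside the construction above and must either be shown not to arise under the intended hypotheses or be treated by a separate argument.
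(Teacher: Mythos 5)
Your construction is essentially the paper's own proof under different labels: the paper also takes the $3$-induced subgraph consisting of the $5$-cycle, a pendant $y$ on $x_3$ (your $c$), the geodesic from $x_1$ to $u$, and a cherry plus pendant at $u$ (your $v,v',w$ appear there as $u_{3k+1},u_{3k+2},v$), verifies connectivity of its $3$-distance graph by exhibiting paths through the residue classes mod $3$, and then invokes Lemma \ref{lem22}. The degenerate case $u=x_1$ that you flag is likewise left implicit in the paper (its figure tacitly assumes $d(u,x_1)\geqslant2$), so your treatment matches the intended argument.
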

\begin{proof}
By the assumption, if $d(u,x_1)=3k$, $G$ has 3-induced subgraph $H$ isomorphic to the following graph, where $u_{3k}=u$. 
\begin{center}
\begin{tikzpicture}
\draw (1.4,0.5)--(2.4,.5);\draw (-7,0)--(-3,0);\draw [dashed](-3,0)--(-2,0);
\draw (-2,0)--(0,0)--(.6,.8)--(1.4,0.5)--(1.4,-0.5)--(.6,-.8)--(0,0);
\draw (-5,0)--(-5,-1);
\foreach \y in {-7,-6,...,0} \filldraw (\y,0) circle(2pt);
\filldraw (-5,-1) circle(2pt);\filldraw (.6,.8) circle(2pt);\filldraw (1.4,0.5) circle(2pt);\filldraw (1.4,-0.5) circle(2pt);\filldraw (.6,-.8) circle(2pt);
\filldraw (2.4,.5) circle(2pt);
\node at (0,-.3){$x_1$};
\node at (.6,1.1){$x_2$};
\node at (1.4,0.8){$x_3$};
\node at (1.4,-0.8){$x_4$};
\node at (.6,-1.1){$x_5$};
\node at (2.4,0.8){$y$};
\node at (-1,.3){$u_1$};
\node at (-2,-.3){$u_2$};
\node at (-3,.3){$u_{3k-2}$};
\node at (-4,-.3){$u_{3k-1}$};
\node at (-5,.3){$u_{3k}$};
\node at (-6,-.3){$u_{3k+1}$};
\node at (-7,.3){$u_{3k+2}$};
\node at (-5,-1.3){$v$};
\end{tikzpicture}
\end{center}
$u_1x_4$, $u_2x_5yx_1u_3u_6\cdots u_{3k}$, $x_3u_1u_4\cdots u_{3k-2}u_{3k+1}$ and $x_2u_2u_5\cdots u_{3k+2}vu_{3k-2}$ are paths in $D_3(H)$. Thus $D_3(H)$ is connected. By Lemma \ref{lem22}, $D_3(G)$ is connected.

The proof of the case $d(u,x_1)=3k+2$ is similar.
\end{proof}

\begin{lemma}\label{lem212}
Let $G$ be a unicyclic graph with cycle $x_1x_2\cdots x_5x_1$ such that $\deg(x_1)\geqslant3$ and $\deg(x_2)\geqslant3$, $G$ has an inner node $u$ on  the connected component of $G-\{x_2 , x_3, x_4 , x_5\}$ containing $x_1$, with $d(u,x_1)=3k+1\text{ or }3k+2$, then $D_3(G)$ is connected.
\end{lemma}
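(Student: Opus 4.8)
The plan is to follow the template of Lemma \ref{lem211}: in each of the two residue cases I extract a concrete $3$-induced subgraph $H\leqslant G$ on which $D_3(H)$ can be shown connected by exhibiting a handful of explicit paths, and then invoke Lemma \ref{lem22}. Using $\deg(x_1)\geqslant3$ I take the path $x_1=u_0,u_1,u_2,\dots$ running off the cycle toward $u$ inside the component of $G-\{x_2,x_3,x_4,x_5\}$ that contains $x_1$; using $\deg(x_2)\geqslant3$ I attach a pendant $y$ to $x_2$; and using that $u$ is an inner node I extend the path two steps past $u$ and hang a pendant $v$ at $u$ (the two neighbours of $u$ of degree $\geqslant2$ guarantee this configuration embeds). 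Since $G$ is unicyclic and $H$ carries the only cycle together with a path of length $\geqslant5$, $H$ is automatically $3$-induced and every $x\in V(G)\setminus V(H)$ has some $H$-vertex at distance $\geqslant3$, which are exactly the hypotheses of Lemma \ref{lem22}.

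The heart of the argument is connectivity of $D_3(H)$, and the key observation is how the distance-$3$ adjacencies organise the path vertices into three residue chains according to $d(x_1,u_i)\bmod 3$: $A=\{x_1,u_3,u_6,\dots\}$, $B=\{u_1,u_4,\dots\}$, $C=\{u_2,u_5,\dots\}$. The $C_5$ together with the pendant $y$ produce the fixed $D_3$-edges $u_1x_3$, $u_1x_4$, $u_2x_2$, $u_2x_5$, $yx_4$, $yx_5$ (and $x_1u_3$ along the path), which glue chain $B$ and chain $C$ to the cycle through $u_1$ and $u_2$ respectively and link $y,x_4,x_5$. The pendant $v$ at $u$ supplies the crucial cross-chain edge: because $d(v,\cdot)=3$ precisely at the two path vertices at distance $2$ from $u$, in the case $d(u,x_1)=3k+1$ it joins $A$ to $C$ (via $u_{3k+3}$ and $u_{3k-1}$), while in the case $d(u,x_1)=3k+2$ it joins $A$ to $B$ (via $u_{3k}$ and $u_{3k+4}$). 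In either case the three chains and the cycle fuse into a single connected piece.

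Concretely, for $d(u,x_1)=3k+1$ I would display the paths $x_3\,u_1\,x_4\,y\,x_5\,u_2\,u_5\cdots u_{3k-1}\,u_{3k+2}$, together with $u_1\,u_4\cdots u_{3k-2}\,u_{3k+1}$ and $x_1\,u_3\cdots u_{3k}\,u_{3k+3}\,v\,u_{3k-1}$, plus the edge $x_2u_2$; these cover all vertices of $H$ and overlap pairwise (in $u_2$, $u_1$, $u_{3k-1}$), so $D_3(H)$ is connected. The case $d(u,x_1)=3k+2$ is handled by the entirely analogous list, now routing chain $A$ into chain $B$ through $v$ at $u_{3k}$ and $u_{3k+4}$: e.g. $x_3\,u_1\,x_4\,y\,x_5\,u_2\cdots u_{3k+2}$, the chain-$A$ path $u_{3k+3}\,u_{3k}\cdots u_3\,x_1$, the bridge $u_{3k}\,v\,u_{3k+4}\,u_{3k+1}\cdots u_4\,u_1$, and the edge $x_2u_2$. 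Having established connectivity of $D_3(H)$, Lemma \ref{lem22} finishes the proof.

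The main obstacle — indeed the only delicate point — is the path bookkeeping: one must check that every listed step is a genuine distance-$3$ step (both endpoints at graph-distance exactly $3$, not $\leqslant2$), that the walks collectively reach the tail vertices $u_{3k+2}$ or $u_{3k+4}$ and both pendants, and that the two residue cases are routed correctly through $v$. A secondary point worth a sentence is the degenerate small-$k$ configurations (where $u$ lies close to $x_1$ and the chains are short); these are verified directly and do not affect the argument.
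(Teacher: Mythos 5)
Your argument is correct, and it reaches the conclusion by a route that differs from the paper's in its key step. Both proofs follow the same template (extract a $3$-induced subgraph $H$, show $D_3(H)$ is connected, invoke Lemma \ref{lem22}), but the paper chooses $H$ to be a \emph{tree}: it discards $x_3$ and $x_4$ and keeps only the spine $y\,x_2\,x_1\,u_1\cdots u_{3k+3}$ with pendants $x_5$ at $x_1$ and $v$ at $u$, observes that this is exactly $H_{3k+1}$ (resp.\ $H_{3k+2}$), and so gets connectivity of $D_3(H)$ for free from Lemma \ref{lem24}. You instead keep the whole $C_5$ inside $H$ and re-verify connectivity of $D_3(H)$ by hand, in the style of the paper's proof of Lemma \ref{lem211}. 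Your verification checks out: the residue-chain decomposition of the path, the fixed edges $u_1x_3$, $u_1x_4$, $u_2x_2$, $u_2x_5$, $yx_4$, $yx_5$, and the cross-chain bridge through $v$ (joining residues $0$ and $2$ when $d(u,x_1)=3k+1$, residues $0$ and $1$ when $d(u,x_1)=3k+2$) are all genuine distance-$3$ adjacencies, and together they cover and connect all of $V(H)$. The price of your choice is the extra bookkeeping and the degenerate case you flag: for $k=0$ in the $3k+1$ case your third path literally ends at the nonexistent $u_{3k-1}$ and must be patched (e.g.\ via the edges $vx_2$ and $vx_5$), whereas the paper's reduction to $H_t$ absorbs all values of $k$ uniformly. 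Conversely, your version makes the mechanism of connectivity visible rather than hiding it inside Lemma \ref{lem24}, and it would be shortened to the paper's proof simply by deleting $x_3$ and $x_4$ from your $H$ and recognizing what remains as $H_t$.
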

\begin{proof}
By the assumption, if $d(u,x_1)=3k+1$, $G$ has the following 3-induced subgraph, where $u_{3k+1}=u$. 
\begin{center}
\begin{tikzpicture}
\draw (0,0)--(.6,.8)--(-.4,.8);\draw (-7,0)--(-3,0);\draw [dashed](-3,0)--(-2,0);
\draw (-2,0)--(0,0)--(.6,-.8);
\draw (-5,0)--(-5,-1);
\foreach \y in {-7,-6,...,0} \filldraw (\y,0) circle(2pt);
\filldraw (-5,-1) circle(2pt);\filldraw (.6,.8) circle(2pt);\filldraw (.6,-.8) circle(2pt);
\filldraw (-.4,.8) circle(2pt);
\node at (0,-.3){$x_1$};
\node at (.6,1.1){$x_2$};
\node at (.6,-1.1){$x_5$};
\node at (-.4,1.1){$y$};
\node at (-1,.3){$u_1$};
\node at (-2,-.3){$u_2$};
\node at (-3,.3){$u_{3k-1}$};
\node at (-4,-.3){$u_{3k}$};
\node at (-5,.3){$u_{3k+1}$};
\node at (-6,-.3){$u_{3k+2}$};
\node at (-7,.3){$u_{3k+3}$};
\node at (-5,-1.3){$v$};
\end{tikzpicture}
\end{center}

this subgraph is isomorphic to $H_{3k+1}$ and by Lemmas \ref{lem22} and \ref{lem24}, $D_3(G)$ is connected.

The proof of the case $d(u,x_1)=3k+2$ is similar.
\end{proof}

\begin{theorem}\label{the213}
Let $G$ be a unicyclic graph with induced cycle $H=x_1x_2\cdots x_5x_1$.
$D_3(G)$ is disconnected if and only if  one of the following conditions is held:
\vspace{-.6cm}
\begin{itemize}
\item
 $H$ has $3$ vertices with degree atleast $3$ and\\
$ i)$  $\deg(x_1)=3$, $\deg(x_2)=\deg(x_5)=2$, $G-\{x_2, x_3, x_4,x_5\}=T\cup sK_1$ where $T$ is a tree and any inner node of $G$ on $T$ has distance $3k+1$ to $x_1$. \\
$ ii)$   $\deg(x_3)=\deg(x_4)=2$, $G-\{x_2, x_3, x_4,x_5\}=T\cup sK_1$ where $T$ is a tree and any inner node on $T$ has distance $3k$ to $x_1$. 
\item
 $H$ has $2$ vertices with degree atleast $3$ and\\
$ i)$  $\deg(x_1)=3$, $\deg(x_2)=\deg(x_3)=\deg(x_5)=2$, $G-\{x_2, x_3, x_4,x_5\}=T\cup sK_1$ where $T$ is a tree and any inner node of $G$  on $T$ has distance $3k+1$ to $x_1$. \\
$ ii)$   $\deg(x_2)=\deg(x_3)=\deg(x_4)=2$, $G-\{x_2, x_3, x_4,x_5\}=T\cup sK_1$ where $T$ is a tree and any inner node in $T$ has distance $3k$ to $x_1$. \item
 $H$ has $1$ vertex with degree atleast $3$ and
$\deg(x_2)=\deg(x_3)=\deg(x_4)=\deg(x_5)=2$, $T=G-\{x_4, x_5\}$  is a tree and the distance of any two inner nodes on $T$ is a multiple of $3$. 
\item
 $G=C_5$.
\end{itemize}
\end{theorem}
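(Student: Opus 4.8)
The plan is to prove the two implications separately, using throughout that $diam(C_5)=2$: no two vertices of the cycle $H$ are at distance $3$, so every edge of $D_3(G)$ must involve a vertex lying off $H$. A second standing observation is that every cycle vertex of degree at least $3$ is automatically an inner node of $G$ (its two cycle-neighbours already have degree at least $2$); consequently the content of the theorem lies not in the cycle vertices themselves but in the inner nodes sitting \emph{inside} the pendant trees, and in their distances modulo $3$ to the cycle vertex anchoring them. I would also record that the shape ``$G-\{x_2,x_3,x_4,x_5\}=T\cup sK_1$'' forces the attachments at $x_3,x_4$ to be single leaves, so the combinatorics is rigid. Since the dihedral group acts on $H$, I would reduce the bookkeeping by classifying configurations up to this symmetry according to the number $r$ of cycle vertices of degree at least $3$ and the relative position (adjacent or not) of the relevant vertices.

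For the direction ``$\Leftarrow$'' (each listed condition forces $D_3(G)$ disconnected), the case $G=C_5$ is immediate, since $D_3(C_5)$ is edgeless. For each remaining configuration I would root $G$ at the distinguished vertex $x_1$ and exhibit a proper, nonempty set $S$ of vertices across which no $D_3$-edge runs. The separation comes from a distance-modulo-$3$ invariant relative to $x_1$: because the hypothesis confines all branch inner nodes to a single residue class mod $3$, and because the two arcs of the pentagon from $x_1$ have lengths $2$ and $3$, one verifies that a $D_3$-step can only join vertices whose $x_1$-distances lie in a prescribed residue pattern, trapping $x_1$ (and its class) in its own component. The smallest instance, $C_5$ with one pendant at $x_1$, illustrates the mechanism: there $x_1$ is at distance at most $2$ from every other vertex and is therefore isolated in $D_3(G)$.

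For the direction ``$\Rightarrow$'' I would argue the contrapositive: assuming $G$ satisfies none of the four bullets, I produce a connectivity certificate. If $r\geqslant 4$, two attachments lie close enough on the pentagon that $G$ contains one of the subgraphs of Lemma \ref{lem210}, giving connectivity at once. If $r=1$, then $T=G-\{x_4,x_5\}$ is a tree; failure of the last bullet yields two inner nodes of $T$ at distance $\not\equiv 0\pmod 3$, so $D_3(T)$ is connected by the argument of Theorem \ref{the27}, and Lemma \ref{lem22} lifts this to $G$. For $r=2$ and $r=3$ I would split into the adjacent and non-adjacent subcases and, after rotating $H$ so that the high-degree vertices occupy the positions $\{x_1,x_2\}$ or $\{x_1,x_3\}$ demanded by Lemmas \ref{lem212} and \ref{lem211}, use the negation of the relevant modular hypothesis to place a genuine branch inner node at a distance to $x_1$ in the residue class those lemmas require; the few residual subconfigurations are dispatched by exhibiting a subgraph from Lemma \ref{lem210}.

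The hard part will be the $r=2$ and $r=3$ analysis, where the statement's precise degree prescriptions and ``multiple of $3$'' residue conditions must be matched exactly against Lemmas \ref{lem211} and \ref{lem212}. The crucial subtlety is that a cycle vertex, although always an inner node of $G$, \emph{loses} its two cycle-neighbours in $T$ and so need not be a usable inner node of the branch; this is precisely why the low-degree cases (e.g.\ $\deg(x_1)=3$) behave specially and cannot be collapsed with the higher-degree ones. I would need to check that every failure of a listed condition can be rotated into the exact position and complementary residue class required by one of the lemmas, that unicyclicity really forces the $T\cup sK_1$ shape used in the converse, and that the $2$-versus-$3$ arc-length accounting around the pentagon never lets a stray $D_3$-edge leak out of the separating set $S$. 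Equal care is needed so that the symmetry reduction does not silently merge or discard a genuinely disconnected configuration.
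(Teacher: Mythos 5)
Your proposal follows essentially the same route as the paper: both directions hinge on Lemma \ref{lem210} to bound and position the high-degree cycle vertices, on Lemmas \ref{lem211} and \ref{lem212} to force the branch inner nodes into a single residue class modulo $3$, and on exhibiting the set $\{v : d(v,x_1)\equiv 0 \text{ or } 1 \pmod 3\}$-type classes as connected components of $D_3(G)$ in the disconnected configurations (with Theorem \ref{the27} handling the single-attachment case). The paper merely packages the two implications into one combined case analysis rather than separating them, so your plan is a faithful, if slightly reorganized, version of the same argument.
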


\begin{proof}If $G=H$, then $D_3(G)$ is disconnected. By Lemma \ref{lem210}, we can assume that $\deg(x_1)\geqslant3$,
 $H$ has atmost $3$ vertices with degree atleast $3$ in $G$ and $G-\{x_2, x_3, x_4,x_5\}=T\cup sK_1$, where $T$ is a tree containing $x_1$.
\\
If $G-H$ be a null graph, then $D_3(G)$ is disconnected and so we can consider $T-x_1$ has atleast one edge.
\\
 Thus, We have the following cases. \\
Case 1: $H$ has $2$ or $3$ vertices with degree atleast $3$. By Lemma \ref{lem210}, 
$\deg(x_2)=\deg(x_5)=2$ or $\deg(x_3)=\deg(x_4)=2$.
\\
If $\deg(x_2)=\deg(x_5)=2$ and $\deg(x_3)\geqslant3$, then by Lemma \ref{lem211}, $\deg(x_1)=3$ and for any inner node $u$ of $G$ on $T$, $d(u,x_1)=3k+1$. 
By this,  the distance of any two inner nodes of $G$ on $T$ is a multiple of $3$.
Let $a\in N(x_1)\setminus V(H)$. Then the distance of any inner nodes of $T$ to $a$ is a multiple of $3$. Thus $D_3(T)$ is disconnected and $K=\langle\{v : d(v,a)=3k, k=0,1,2,\cdots\}\rangle$ is a connected componet of $D_3(T)$.
Since for any $u\in V(K)\cup\{x_3,x_4\}$ and $v\in V(G-K)\setminus \{x_3,x_4\}$, $d(u,v)\ne3$, $\langle V(K)\cup\{x_3,x_4\}\rangle$ is a connected component of $D_3(G)$ and then $D_3(G)$ is disconnected.
\\
If $\deg(x_3)=\deg(x_4)=2$ and $\deg(x_2)\geqslant3$, then by Lemma \ref{lem212}, for any inner node $u$ of $G$ on $T$, $d(u,x_1)=3k$. 
By Theorem \ref{the27}, $D_3(T)$ is disconnected and $M=\langle\{u\in V(T):d(u,x_1)=3k\}\rangle$ is a connected component of $D_3(T)$. Since for any $u\in V(M)$ and $v\in \{x_2,x_3,x_4,x_5\}\cup N(x_2)\cup N(x_5)$, $d(u,v)\ne3$, $M$ is a connected component of $D_3(G)$ and then $D_3(G)$ is discinnected.
\\
Case 2 :
$x_1$ is the unique vertex of $H$ such that has degree atleast $3$ in $G$. Since $T$ is $3$-induced subgraph of $G$, the connctivity of $D_3(T)$ concludes the connctivity of $D_3(G)$. 
Therefore, $D_3(T)$ is disconnected and so the distance of any two inner nodes in $T$ is a multiple of $3$. By symmetry, in $D_3(G)$ we have $N(x_2)=N(x_5)$ and $N(x_3)=N(x_4)$ and, then  $D_3(G)$ is disconnected too.\end{proof}

Now, we consider unicyclic graphs with $C_4$.

\begin{lemma}\label{lem214}
Let $G$ be a unicyclic graph with cycle $x_1x_2x_3x_4x_1$ such that $\deg(x_1),\deg(x_2)\geqslant3$ and the connected component of $G-\{x_2 , x_3, x_4 \}$ containing $x_1$ has an inner node $u$, with $d(u,x_1)=3k+1 \text{ or } 3k+2$, then $D_3(G)$ is connected.
\end{lemma}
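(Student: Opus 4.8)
The plan is to follow the template of Lemmas \ref{lem211} and \ref{lem212}: locate inside $G$ a $3$-induced subgraph $H$ that is isomorphic to some $H_t$ with $\gcd(t,3)=1$, apply Lemma \ref{lem24} to get that $D_3(H)$ is connected, and then close with Lemma \ref{lem22}. The crucial point is that $x_1$ can serve as one of the two inner nodes of $H_t$: it has degree at least $3$, and its two cycle-neighbours $x_2$ and $x_4$ each have degree at least $2$, so $x_1$ is itself an inner node. The given vertex $u$, lying at distance $m=d(u,x_1)\in\{3k+1,3k+2\}$ along the tree hanging off $x_1$, will be the other inner node, and $t=m$ satisfies $\gcd(t,3)=1$.

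First I would construct $H$. Let $x_1=u_0u_1\cdots u_m=u$ be the unique path from $x_1$ to $u$ inside the component of $G-\{x_2,x_3,x_4\}$ containing $x_1$. At $x_1$ I attach the length-$2$ leg $x_1x_2y$, where $y\in N(x_2)\setminus\{x_1,x_3\}$ exists because $\deg(x_2)\geqslant3$, together with the length-$1$ leg $x_1x_4$; these use three distinct neighbours of $x_1$. At $u$ I use the inner-node hypothesis: apart from $u_{m-1}$, the vertex $u$ has two further neighbours, one carrying an edge to a further vertex (a length-$2$ leg) and the other a pendant, and these supply the remaining two legs. Since $G$ is unicyclic the trees hanging off $x_1$ and off $x_2$ are disjoint, so all chosen vertices are distinct and $H\cong H_m$. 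As $m=3k+1$ or $3k+2$, Lemma \ref{lem24} gives that $D_3(H)$ is connected.

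It then remains to check the two hypotheses of Lemma \ref{lem22}. For the $3$-induced property the only cycle vertex omitted from $H$ is $x_3$, and the sole pair of $H$-vertices whose $G$-geodesic might run through $x_3$ is $\{x_2,x_4\}$; but $d_G(x_2,x_4)=2$ is already realised by the path $x_2x_1x_4$ in $H$, so no distance between $H$-vertices is shortened by reinstating $x_3$, and unicyclicity guarantees there is no other alternative short route, whence $H$ is $3$-induced. For the second hypothesis, the path $yx_2x_1u_1\cdots u_mww''$ is a geodesic of $G$ of length $m+4\geqslant5$ (any route from $y$ to the deep leaf $w''$ must pass through $x_2$ and then $x_1$), so if some $x\in V(G)\setminus V(H)$ were within distance $2$ of all of $H$, its two endpoints would lie at distance at most $4$, a contradiction; hence such an $x$ always has a vertex of $H$ at distance at least $3$. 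Lemma \ref{lem22} then yields that $D_3(G)$ is connected, and the case $d(u,x_1)=3k+2$ is handled identically, only the central path length changing. The one genuinely delicate step is the $3$-induced verification, where one must confirm that discarding $x_3$ and its pendant tree creates no shortcut between the surviving vertices, which is exactly what the unique-cycle condition prevents.
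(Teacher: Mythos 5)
Your proof is correct and takes essentially the same route as the paper: the paper's own proof just says ``similar to Lemma~\ref{lem212}'', i.e.\ it exhibits a $3$-induced copy of $H_{3k+1}$ or $H_{3k+2}$ built from the $x_1$--$u$ path, the legs $x_1x_4$ and $x_1x_2y$, and the two legs supplied by the inner node $u$, and then applies Lemmas~\ref{lem24} and~\ref{lem22} --- exactly your construction. Your explicit checks that $H$ is $3$-induced (no shortcut through $x_3$) and that every outside vertex is at distance at least $3$ from some vertex of $H$ supply details the paper leaves implicit, but the argument is the same.
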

\begin{proof}
Simillar to the proof of Lemma \ref{lem212}, $G$ has a subgraph isopmorphic to $H_{3k+1}$ or $H_{3k+2}$ and then $D_3(G)$ is connected.
\end{proof}

\begin{theorem}\label{the215}
Let $G$ be a unicyclic graph with induced cycle $H=x_1x_2x_3x_4x_1$.
$D_3(G)$ is connected if and only if  $G$ has a $3$-induced subgraph isomorphic to $H_{3k+1}$ or $H_{3k+2}$ for some $k$ or one of the following graphs.

\begin{center}
\begin{tabular}{ccccccc}
\begin{tikzpicture}
\draw (-1.20,-1.2)--(-.5,-.5)--(.5,-.5)--(1.2,-1.2);
\draw (-1.20,1.2)--(-.5,.5)--(.5,.5)--(1.2,1.2);
\draw (-.5,.5)--(-.5,-.5);\draw (.5,.5)--(.5,-.5);
\filldraw (-.5,.5) circle(2pt);\filldraw (-.5,-.5) circle(2pt);\filldraw (.5,.5) circle(2pt);\filldraw (.5,-.5) circle(2pt);\filldraw (1.20,-1.2) circle(2pt);
\filldraw (-1.20,1.2) circle(2pt);\filldraw (1.20,1.2) circle(2pt);\filldraw (-1.20,-1.2) circle(2pt);
\node at (0,-.9){$G_1$};
\end{tikzpicture}
&&&
\begin{tikzpicture}
\draw (-.5,-.5)--(.5,-.5)--(1.2,-1.2)--(.20,-1.2);
\draw (-1.20,1.2)--(-.5,.5)--(.5,.5)--(1.2,1.2);
\draw (-.5,.5)--(-.5,-.5);\draw (.5,.5)--(.5,-.5);
\filldraw (-.5,.5) circle(2pt);\filldraw (-.5,-.5) circle(2pt);\filldraw (.5,.5) circle(2pt);\filldraw (.5,-.5) circle(2pt);\filldraw (1.20,-1.2) circle(2pt);
\filldraw (-1.20,1.2) circle(2pt);\filldraw (1.20,1.2) circle(2pt);\filldraw (.20,-1.2) circle(2pt);
\node at (0,-.9){$G_2$};
\end{tikzpicture}
&&&
\begin{tikzpicture}
\draw (-.5,-.5)--(.5,-.5)--(1.2,-1.2)--(.20,-1.2);
\draw (-.5,.5)--(.5,.5)--(1.2,1.2)--(.2,1.2);
\draw (-.5,.5)--(-.5,-.5);\draw (.5,.5)--(.5,-.5);
\filldraw (-.5,.5) circle(2pt);\filldraw (-.5,-.5) circle(2pt);\filldraw (.5,.5) circle(2pt);\filldraw (.5,-.5) circle(2pt);\filldraw (1.20,-1.2) circle(2pt);
\filldraw (.20,1.2) circle(2pt);\filldraw (1.20,1.2) circle(2pt);\filldraw (.20,-1.2) circle(2pt);
\node at (0,-.9){$G_3$};
\end{tikzpicture}
\end{tabular}
\end{center}
\end{theorem}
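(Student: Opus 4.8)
The plan is to dispatch sufficiency quickly and reserve the effort for necessity. For the $(\Leftarrow)$ direction, if $G$ contains a $3$-induced copy of $H_{3k+1}$ or $H_{3k+2}$, then $D_3$ of that copy is connected by Lemma \ref{lem24}; since such a copy contains a path of length $\geq 5$ and $G$ is unicyclic, every vertex outside it has $G$-distance $\geq 3$ to one of its vertices, so Lemma \ref{lem22} yields connectivity of $D_3(G)$. For the three listed graphs $G_1,G_2,G_3$ I would simply compute $D_3(G_i)$ by hand—each has $8$ vertices—and exhibit a spanning tree of $D_3(G_i)$; then, noting that a copy of $G_i$ in $G$ carries the whole cycle and leaves every other vertex at distance $\geq 3$ from a vertex of $G_i$, I would apply Lemma \ref{lem22} once more.

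For the $(\Rightarrow)$ direction I argue the contrapositive: assuming $G$ contains no $3$-induced $H_{3k+1},H_{3k+2}$ and no copy of $G_1,G_2,G_3$, I show $D_3(G)$ is disconnected. First I record the distance geometry. Writing $G$ as the cycle $x_1x_2x_3x_4x_1$ with rooted trees $T_1,\dots,T_4$ and $r_i(v)=d(v,x_i)$ for $v\in T_i$, unicyclicity forces $d(u,v)$ to equal the tree distance when $u,v$ lie in the same $T_i$, and $d(u,v)=r_i(u)+c_{ij}+r_j(v)$ when $u\in T_i,\,v\in T_j$ with $i\ne j$, where $c_{ij}=1$ for adjacent and $c_{ij}=2$ for opposite roots. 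Hence the inter-tree $D_3$-edges are exactly the pairs with $r_i(u)+r_j(v)=2$ across a cycle edge and $r_i(u)+r_j(v)=1$ across a diagonal.

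I would then classify by the number and position of cycle vertices of degree $\ge 3$. Lemma \ref{lem214} is the engine: two adjacent high-degree cycle vertices together with an inner node at distance $\not\equiv 0\pmod 3$ in the attached tree would produce a forbidden $3$-induced $H_{3k+1}$ or $H_{3k+2}$, and likewise two inner nodes at distance $\not\equiv 0\pmod 3$ sitting far from the cycle inside a single $T_i$ would give a forbidden $H_t$ via the construction in the $(\Leftarrow)$ part of Theorem \ref{the27}. Ruling all of these out pins each $T_i$ down to one of a short list of local shapes—isolated vertices, a pendant edge, or a path with every branch vertex at distance $\equiv 0\pmod 3$ from $x_i$—and constrains the placement of the high-degree roots. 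For each surviving configuration I would exhibit a proper, nonempty vertex set closed under $D_3$-adjacency, built from the residues $r_i(v)\bmod 3$; the distance formula above certifies closure provided none of the forbidden wrap-around patterns is present.

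The main difficulty, and the reason the theorem needs the three exceptional graphs at all, is the wrap-around through the even cycle $C_4$, which defeats any single global invariant such as $d(a,\cdot)\bmod 3$ from a fixed root $a$. A diagonal $D_3$-edge $u\sim v$ with $u\in T_2,\,v\in T_4$ is realised along the two different diagonal paths $x_2x_1x_4$ and $x_2x_3x_4$, and the distance formula gives $d(a,u)-d(a,v)=r_2(u)-r_4(v)=\pm1\not\equiv0\pmod3$, so such an edge crosses residue classes. These are precisely the shortcuts that connect $G_1,G_2,G_3$, and the same chords $x_ix_{i+1}$ are what destroy the $3$-inducedness of the $H_t$ one would otherwise read off (for instance in $G_3$ the chord collapses $d(x_1,x_4)$ to $1$). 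The delicate step is therefore to verify, once $G_1,G_2,G_3$ are excluded, that the surviving diagonal and cycle $D_3$-edges are too few to merge the candidate set with its complement; I expect this ``no accidental bridge'' check to be the most error-prone part and would carry it out as a finite verification over the admissible local pictures at each root rather than through a uniform coloring.
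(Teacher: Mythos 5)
Your skeleton matches the paper's: necessity is argued contrapositively, Lemma \ref{lem214} forces every inner node of a pendant tree to lie at distance $\equiv 0\pmod 3$ from its cycle root, the exclusions of $G_2,G_3$ force all but one pendant tree to consist of leaves, and a candidate vertex set is then exhibited as a proper $D_3$-component; sufficiency (which the paper in fact never writes out) goes through Lemmas \ref{lem24} and \ref{lem22} plus a hand check of $G_1,G_2,G_3$, exactly as you propose. The substantive divergence is at the step you yourself flag as most error-prone. Where you plan a finite ``no accidental bridge'' verification that residue classes $d(a,\cdot)\bmod 3$ are closed under $D_3$-adjacency, the paper sidesteps this in the troublesome configurations with a twin-vertex reduction: whenever two \emph{opposite} cycle vertices, say $x_2$ and $x_4$, both have degree $2$, they are false twins ($N(x_2)=N(x_4)=\{x_1,x_3\}$), hence have identical $D_3$-neighbourhoods, so $D_3(G)$ is connected iff $D_3(G-x_2)$ is; since $G-x_2$ is a tree, Theorem \ref{the27} finishes. (Similarly, when $x_1,x_2$ both have degree $2$ the paper passes to the $3$-induced tree $G-x_1$.) This is worth adopting, because the deferred verification is not a formality: even inside a single pendant tree a $D_3$-edge $uv$ satisfies $d(a,u)-d(a,v)=\pm 1$ unless one endpoint lies on the $a$--path of the other, so the set $\{v:d(a,v)\equiv 0\pmod 3\}$ is closed only by virtue of the inner-node condition, and the diagonal edges through $C_4$ that you correctly identify add a second source of leakage. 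The twin reduction eliminates the latter wholesale and collapses the hardest cases onto the already-proved tree theorem, leaving only the one-degree-$2$-vertex and adjacent-degree-$2$ configurations to be handled by the direct component exhibition you describe.
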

\begin{proof}
By contradiction let us assume that $D_3(G)$ is connected but $G$ has no $3$-induced subgraph isomorphic to the above graphs. Since $G$ has no subgraph isomorphic to $G_1$, there is atleast one vertex in $H$ with degree 2. If $H$ has exactly one vertex of degree 2,  we can assume $\deg(x_3)=2$. Since $G$ has no subgraph isomorphic to $G_2$,  all elements of $N(x_2)\setminus V(H)$ and $N(x_4)\setminus V(H)$ are leaves and then $G-\{x_2,x_3,x_4\}=T\cup sK_1$, where $T$ is a tree containing $x_1$. By Lemma \ref{lem214}, $d(u,x_1)=3k$ for any inner node $u$ of $G$ on $T$. But 
$M=\langle\{u\in V(T):d(u,x_1)=3k\}\rangle$ is a connected component of $D_3(G)$, a contradiction. 
\\
Let $H$ has exactly two vertices with degree 2. We have two cases.
\\
\textbf{Case 1:} $\deg(x_1)=\deg(x_2)=2$. Since $G$ has no subgraph isomorphic to $G_3$, we can assume that all elements of $N(x_4)\setminus V(H)$ are leaves. Since $G-x_1$ is a $3$-induced subgraph of $G$ which is a tree, then  it has no $3$-induced subgraph isomorphic to $H_{3k+1}$ or $H_{3k+2}$. 
Hence, the distance of any inner node of $G-x_1$ to $x_3$ is a multiple of 3 and $M=\langle\{u\in V(T):d(u,x_3)=3k\}\rangle$ is a connected component of $D_3(G)$, a contradiction. 
\\
\textbf{Case 2:}
$\deg(x_2)=\deg(x_4)=2$.
Since $N(x_2)=N(x_4)$, $D_3(G)$ is connected if and only if $D_3(G-x_2)$  is connected.
By Theorem \ref{the27}, $D_3(G-x_2)$ is disconnected, a contradiction. Thus, $G$ has three vertices of degree 2. Let $\deg(x_2)=\deg(x_3)=\deg(x_4)=2$. 
Since $N(x_2)=N(x_4)$, a simillar argument gives a contradiction.
\end{proof}

Let $G$ be a graph and $a\in V(G)$. We define $N_3(a)=\{x\in V(G): d(a,x)=3 \}$.
The following theorem charactrizes all unicyclic graph containing $C_3$.

\begin{theorem}\label{the216}
Let $G$ be a unicyclic graph with induced cycle $H=x_1x_2x_3x_1$.
$D_3(G)$ is connected if and only if  $G$ has a $3$-induced subgraph isomorphic to $H_{3k+1}$ or $H_{3k+2}$ for some $k$ or one of the following graphs, where $t=3k\text{ or }3k+2$.

\begin{center}
\begin{tabular}{cccc}
\begin{tikzpicture}
\draw (-2,0)--(1,0)--(.5,.85)--(0,0);
\draw (-1.5,.85)--(1.5,.85);
\filldraw(1,0) circle(2pt);\filldraw[white] (-2,-.5) circle(2pt);
\filldraw (-2,0) circle(2pt);\filldraw (-1,0) circle(2pt);\filldraw (0,0) circle(2pt);\filldraw (-1.5,.85) circle(2pt);\filldraw (-.5,0.85) circle(2pt);\filldraw (.5,0.85) circle(2pt);\filldraw (1.5,0.85) circle(2pt);

\end{tikzpicture}
&&&
\begin{tikzpicture}
\draw (-6,0)--(-3,0);\draw(-2,0)--(1,0)--(.5,.85)--(0,0);
\draw [dashed](-3,0)--(-2,0);
\draw (-1.5,.85)--(.5,.85);\draw (-4,0)--(-4,.85);
\foreach \x in {-6,-5,...,1}\filldraw (\x,0) circle(2pt);
\filldraw (-1.5,.85) circle(2pt);\filldraw (-.5,0.85) circle(2pt);\filldraw (.5,0.85) circle(2pt);\filldraw (-4,0.85) circle(2pt);
\node at (0,-.3){$x_1$};\node at (0.5,1.15){$x_2$};\node at (1,-.3){$x_3$};
\node at (-1,-.3){$u_1$};\node at (-2,-.3){$u_2$};\node at (-3,-.3){$u_{t-1}$};
\node at (-4,-.3){$u_t$};\node at (-5,-.3){$u_{t+1}$};\node at (-6,-.3){$u_{t+2}$};
\node at (-1.5,1.15){$b$};\node at (-0.5,1.15){$a$};\node at (-4,1.15){$c$};
\end{tikzpicture}
\end{tabular}
\end{center}
\end{theorem}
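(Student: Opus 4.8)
The plan is to prove both directions by the same mechanism used for Theorems \ref{the213} and \ref{the215}: for the ``if'' part I would exhibit the connecting structure explicitly and invoke Lemma \ref{lem22}, and for the ``only if'' part I would argue the contrapositive, assuming $G$ contains none of the listed subgraphs and producing a proper connected component of $D_3(G)$.

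For the ``if'' direction, if $G$ has a $3$-induced subgraph isomorphic to $H_{3k+1}$ or $H_{3k+2}$ the claim is immediate from Lemmas \ref{lem24} and \ref{lem22}, so I would concentrate on the two displayed graphs. Calling such a subgraph $H$, I would first check directly that $D_3(H)$ is connected by writing down a spanning family of paths in $D_3(H)$ (the first graph is a short direct computation). For the second graph the key point is that the pendant $c$ at $u_t$ produces the two edges $c\,u_{t-2}$ and $c\,u_{t+2}$ of $D_3(H)$, while the triangle produces the path $x_1\,b\,x_3\,u_2$ in $D_3(H)$; writing the path $x_1u_1\cdots u_{t+2}$ in its three residue classes mod $3$, the triangle bridge joins the class of $x_1$ (residue $0$) to that of $u_2$ (residue $2$), and because $t\equiv 0$ or $2\pmod 3$ the pair of classes joined by $c$ together with $\{0,2\}$ covers all three classes, so $D_3(H)$ is connected. (When $t\equiv 1\pmod 3$ both bridges join the pair $\{0,2\}$ and class $1$ stays separate, which is exactly why that residue is excluded.) Since $G$ is unicyclic and its only cycle lies inside $H$, every shortest path between two vertices of $H$ stays in $H$, so $H$ is $3$-induced; and since $H$ has diameter at least $5$, every vertex outside $H$ has a vertex of $H$ at distance at least $3$. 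Lemma \ref{lem22} then gives that $D_3(G)$ is connected.

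For the ``only if'' direction I would work with the three pendant trees $T_1,T_2,T_3$ rooted at $x_1,x_2,x_3$ obtained by deleting the triangle edges, using that $d_G(x_i,x_j)=1$, that $d_G(u,v)=d_{T_i}(u,v)$ for $u,v$ in the same $T_i$, and that $d_G(u,v)=d(u,x_i)+1+d(v,x_j)$ for $u\in T_i$, $v\in T_j$ with $i\ne j$. Each $T_i$ is a $3$-induced subtree of $G$, so the hypothesis that $G$ has no $3$-induced $H_{3k+1}$ or $H_{3k+2}$, together with Theorem \ref{the27}, forces the inner nodes of $G$ lying inside each $T_i$ to be pairwise at distance a multiple of $3$ (the root $x_i$, whose degree the triangle inflates, being handled separately). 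I would then split into cases according to how many of $x_1,x_2,x_3$ have degree at least $3$ (handling $G=C_3$ first, where $D_3(G)$ is edgeless and hence disconnected), and within each case use the exclusion of the two displayed graphs to pin down the residues mod $3$ of the off-cycle inner nodes relative to the triangle. In each surviving case I would write down an explicit vertex set $S$ (one residue class, corrected at the triangle) and verify it is closed under $D_3(G)$-adjacency, yielding a proper component. The displayed graphs reappear here precisely as the minimal configurations in which a second bridge---a pendant at the correct residue off the cycle, or the specific short arms of the first graph---would reconnect the missing class.

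The main obstacle is the set of cross edges of $D_3(G)$ between different pendant trees, which exist only because the three roots are mutually adjacent: $x_i$ is $D_3$-adjacent to every depth-$2$ vertex of $T_j$, every depth-$1$ vertex of $T_i$ is $D_3$-adjacent to every depth-$1$ vertex of $T_j$, and so on. A short computation shows that no assignment of residues to $x_1,x_2,x_3$ can make all of these cross edges residue-preserving, so---unlike the tree case of Theorem \ref{the27}---there is no single global mod-$3$ invariant whose classes are the components. Consequently the closedness of the candidate set $S$ cannot be read off from a colouring and must be checked by hand in each case, after the no-subgraph hypothesis has been used to forbid exactly those depth-$1$ and depth-$2$ vertices, and those off-cycle inner nodes, whose presence would activate a reconnecting cross or pendant edge. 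Verifying that these forbidden configurations are \emph{exactly} the two displayed graphs (with $t\equiv 0,2\pmod 3$) together with the $H_{3k+1}/H_{3k+2}$ subgraphs is the technical heart of the argument.
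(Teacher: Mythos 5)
Your plan follows essentially the same route as the paper's proof: the ``only if'' direction is argued by contradiction, using the exclusion of the listed subgraphs together with Theorem \ref{the27} applied to the pendant trees at $x_1,x_2,x_3$ to force the relevant distances to be multiples of $3$, and then exhibiting a residue-class set (corrected at the triangle) as a proper component of $D_3(G)$. Your bridging analysis for the second displayed graph --- the triangle giving the $D_3$-path $x_1\,b\,x_3\,u_2$ joining classes $\{0,2\}$, the pendant $c$ joining classes $\{t-2,t+2\}$ mod $3$, and these covering all three classes exactly when $t\not\equiv1\pmod 3$ --- is correct and in fact supplies the ``if'' direction, which the paper's own proof leaves implicit.
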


\begin{proof}
By contradiction let us assume that $D_3(G)$ is connected but $G$ has no $3$-induced subgraph isomorphic to the above graphs.
\\
First, let $K= N(x_2)\cup N(x_3)\setminus\{x_1,x_2,x_3\}$, each $y\in K$ be a leaf and $\deg(x_2)\leqslant \deg(x_3)$. Also, let $T$ be the connected component of $G-x_2$ containing $x_1$.
 $D_3(G)$ is connected if and only if $D_3(T)$ is connected because $N_3(x_2)=N_3(x_3)$ and if  $y_2$ and $y_3$ be leaves of $x_2$ and $x_3$, respectively, then $N_3(y_2)\setminus K=N_3(y_3)\setminus K$. 
Thus $D_3(T)$ is connected. Since $T$ is a tree, by Theorem \ref{the27}, $T$ has a 3-induced subgraph isomorphic to $H_{3k+1}$ or $H_{3k+2}$ which is a 3-induced subgraph of $G$, a contradiction.
\\
Thus, we can assume that each of $N(x_1)-\{x_2,x_3\}$ and $N(x_2)-\{x_1,x_3\}$ contain atleast one vertex which is not a leaf. 
By assumption $\deg(x_1)=\deg(x_2)=3$ .
\\
If each $y\in N(x_3)\setminus\{x_1,x_2\}$ be a leaf, then $G-x_3=T\cup sK_1$, where $T$ is a tree and any inner node of $T$ has distance $3k+1$ to $H$. 
\\
Thus, for any  two inner nodes $u$ and $v$ of $T$, $d(u,v)=3k^{\prime}$ and so $D_3(T)$ is disconnected and $A=\langle\{x\in V(T):d(x,w)=3k\}\rangle$ is a connected component of $D_3(T)$, where $w\in N(x_1)\setminus H$. 
Let $b$ be a  leaf of $x_3$. Since $d(b,u)=3$, $d(b,v)=3k$ and $d(x_3,v)\ne3$ for  any $v\in V(T)$, $\langle\{x\in V(G):d(x,w)=3k\}\rangle$  is a connected component of $D_3(G)$, a contradiction.

Then  $N(x_1)-\{x_2,x_3\}$, $N(x_2)-\{x_1,x_3\}$ and $N(x_3)-\{x_1,x_2\}$ contain atleast one vertex which is not a leaf. 
By assumption $\deg(x_1)=\deg(x_2)=\deg(x_3)=3$  and any inner node $u$ of $G$ out of $H$, $d(u,H)=3k+1$ for some $k$.

Similar to the previous case,  $A=\langle\{x\in V(G):d(x,H)=3k+1\}\rangle$ is a connected component of $D_3(G)$, a contradiction.
\end{proof}

By Theorems \ref{the28}, \ref{the29}, \ref{the213}, \ref{the215} and \ref{the216}, all graphs with at most $6$ vertices have disconnected $3$-distance graph. Then we have the following. 

\begin{corollary}
If $G$ is a graph and $D_3(G)$ is connected, then $|G|\geqslant7$.
Furthermore, the cycle $C_7$ is the unique graph with $|V(G)|=7$ such that  $D_3(G)$ is connected.  
\end{corollary}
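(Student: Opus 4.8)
The plan is to combine an eccentricity-based necessary condition with the classifications already proved for trees and unicyclic graphs, and to clear away the remaining (multicyclic) graphs by a short self-contained finite analysis. First I would reduce to connected $G$: if $G$ is disconnected, then vertices in different components are never at distance $3$, so $D_3(G)$ is disconnected; hence assume $G$ connected. The key observation is that if $D_3(G)$ is connected and $|V(G)|\geq 2$, then $D_3(G)$ has no isolated vertex. Since in a connected graph the distances from a fixed vertex take every value from $0$ up to its eccentricity, a vertex $v$ is non-isolated in $D_3(G)$ precisely when it has eccentricity at least $3$. Therefore a connected $D_3(G)$ forces every vertex of $G$ to have eccentricity at least $3$ (equivalently, $G$ has radius at least $3$), i.e.\ every vertex has a partner at distance exactly $3$.

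Then I would prove $|V(G)|\geq 7$. A direct check shows that no connected graph on at most $5$ vertices has radius $\geq 3$ (a diametral shortest path already forces a central vertex of eccentricity $\leq 2$), so those are ruled out. For $|V(G)|=6$ I would show the only connected graphs with radius $\geq 3$ are $P_6$ and $C_6$, by splitting on the diameter $d\in\{3,4,5\}$: when $d=5$ the diametral path spans $G$ and no chord is allowed, giving $G=P_6$; when $d=4$ the sixth vertex cannot be placed at distance $\geq 3$ from the centre of the diametral path without either creating a vertex at distance $5$ or shortcutting the two ends, so $d=4$ is impossible; and when $d=3$ (every eccentricity equal to $3$) a maximum-degree argument forces $\Delta\leq 2$ — a degree-$3$ vertex $v$ would have exactly one vertex at distance $2$ and one leaf at distance $3$, and the distance-$2$ vertex, being adjacent to some neighbour of $v$, would collapse that neighbour's eccentricity to $2$ — so $G=C_6$. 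As $D_3(P_6)=D_3(C_6)=3K_2$ is disconnected, no $6$-vertex graph qualifies, and $|V(G)|\geq 7$ follows.

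For the equality case $|V(G)|=7$ I would show $G\cong C_7$. If $G$ is a tree, Theorem \ref{the27} demands two inner nodes at distance $3k+1$ or $3k+2$; counting the vertices forced by the definition of an inner node shows that even two \emph{adjacent} inner nodes already require $8$ distinct vertices, so no $7$-vertex tree works. If $G$ is unicyclic, I would run through the cycle length $g\in\{3,4,5,6,7\}$ via Theorems \ref{the216}, \ref{the215}, \ref{the213}, \ref{the29}, \ref{the28}: for $g=7$ one gets exactly $C_7$ (Theorem \ref{the28}, since $\gcd(7,3)=1$), while in every other case the minimal configurations certified to yield a connected $D_3$ — the graphs $H_{3k+1},H_{3k+2}$, the graphs $G_1,G_2,G_3$, the special graphs of Lemma \ref{lem210} and Theorem \ref{the216}, or the requirement of two suitably spaced inner nodes — all have at least $8$ vertices, so $C_7$ is the unique $7$-vertex unicyclic graph with connected $D_3$.

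The remaining, and I expect most delicate, case is $G$ multicyclic. This is not covered by Theorems \ref{the28}--\ref{the216}, and such graphs can genuinely have radius $3$ (for example $C_7$ with one short chord, which has radius $3$ yet a disconnected $D_3$). The hard part will therefore be a self-contained treatment of this family: I would enumerate the connected graphs on $7$ vertices with cyclomatic number at least $2$ and radius $\geq 3$, and compute their distance-$3$ pairs to verify that $D_3(G)$ always splits. Assembling the tree, unicyclic, and multicyclic cases then yields that $C_7$ is the unique $7$-vertex graph with connected $D_3(G)$, completing the corollary.
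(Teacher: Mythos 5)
Your route is genuinely different from the paper's, and in several respects more complete. The paper gives no real proof of this corollary: it simply asserts, in the sentence preceding the statement, that the unicyclic theorems imply all graphs on at most $6$ vertices have disconnected $3$-distance graph, and it never addresses trees or multicyclic graphs in that sentence, nor the uniqueness of $C_7$ beyond the same implicit appeal. Your key observation --- that a connected $D_3(G)$ forces every vertex to have a neighbour at distance exactly $3$, hence eccentricity at least $3$, hence radius at least $3$ --- is not in the paper and is what makes the case $|V(G)|\leqslant 6$ go through uniformly for \emph{all} connected graphs, not just trees and unicyclic ones; your reduction of the $6$-vertex case to $P_6$ and $C_6$ and the computation $D_3(P_6)=D_3(C_6)=3K_2$ are correct. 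Your counting argument that two inner nodes force at least $8$ vertices in a tree, and your walk through Theorems \ref{the28}--\ref{the216} for $7$-vertex unicyclic graphs, are also sound. The one piece that remains a plan rather than a proof is the multicyclic case on $7$ vertices: you correctly point out (with the instructive example of $C_7$ plus a chord, which has radius $3$ but disconnected $D_3$) that the radius criterion alone does not finish this case and that the paper's cited theorems do not cover it, but the promised enumeration of $7$-vertex graphs with cyclomatic number at least $2$ and radius at least $3$ is not carried out. That enumeration is finite and routine, so the approach will succeed, but as written your argument for the uniqueness of $C_7$ is complete only for trees and unicyclic graphs --- which, to be fair, is already no worse than what the paper itself establishes.
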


\section{Conclusion}
In this paper, we focused on the connectivity of the 3-distance graph of some connected graphs. For this, we introduced two new definitions, inner node and generalized double star. We provided some conditions for the connectedness of the $3$-distance graph of an arbitrary graph $G$. Also, we characterized all trees and unicyclic graphs with connected $3$-distance graph. The above achievements raise the following question.

\noindent
\textbf{Problem} : Charactrize all graphs with the connected $3$-distance graph.

\noindent
\textbf{\Large Aknowledgments}\\
The authors would like to thank the referees for their helpful comments.

\noindent
\textbf{\Large Declarations}\\
\textbf{Conflict of interest} No potential conflict of interest was reported by the authors.

\end{document}